\documentclass[11pt]{amsart}
\usepackage[active]{srcltx}
\usepackage{mathrsfs}
\usepackage[T1]{fontenc}

\usepackage{latexsym,enumerate}
\usepackage{amsmath,amssymb,amsthm,amsfonts,latexsym}
\usepackage{graphicx}
\usepackage[bookmarks]{hyperref}

\hypersetup{backref, colorlinks=true}
\hypersetup{backref, colorlinks=true, colorlinks   = true,
	urlcolor     = blue, linkcolor = blue, citecolor   = magenta
}
\def\adh#1{\overline{#1}}

\let\=\partial

\newtheorem {pro}{Proposition}[section]
\newtheorem {thm}[pro]{Theorem}%[section]
\newtheorem {cor}[pro]{Corollary}%[section]
\newtheorem{lem}[pro]{Lemma}

\theoremstyle{definition}
 \newtheorem {rem}[pro]{Remark}%[section]
\newtheorem {dfn}[pro]{Definition}%[section]

\newtheorem {exa}[pro]{Example}

\newcommand{\nbf}{{\mathbf{n}}}

\newcommand{\s}{\mathcal{S}}

\newcommand{\tra}{\mathbf{tr}}

\newcommand{\R}{\mathbb{R}}

\newcommand{\N}{\mathbb{N}}
\newcommand{\cc}{\mathscr{C}}

\newcommand{\et}{\quad \mbox{and} \quad }

\newcommand{\hn}{\mathcal{H}}

\newcommand{\mba}{ {\overline{M}}}

\newcommand{\omd}{{\Omega}}

\newcommand{\ep}{\varepsilon}

\newcommand{\pa}{\partial}

\newcommand{\hh}{\mathcal{V}}

\newcommand{\bou}{\mathbf{B}}

\newcommand{\supp}{\mbox{\rm supp}}
\newcommand{\xo}{{x_0}}
\title[]{ Density of Neumann regular smooth functions in Sobolev spaces of subanalytic manifolds}

\makeatletter
 \thanks{Research partially supported
by the NCN grant  2021/43/B/ST1/02359.}
\@namedef{subjclassname@2020}{%
\textup{2020} Mathematics Subject Classification}
\makeatletter

\@addtoreset{equation}{section}

\makeatother

\author[ G. Valette]{ Guillaume Valette}

\address[G. Valette]{Instytut Matematyki Uniwersytetu
Jagiello\'nskiego, ul. S. \L ojasiewicza 6, Krak\'ow, Poland}\email{guillaume.valette@im.uj.edu.pl}

\keywords{Sobolev space, subanalytic domain, subanalytic manifold, singularities,  Neumann regular functions,   density results, smooth functions}

\subjclass[2020]{46B35, 35A21, 32B20, 14P10}

\begin{document} \maketitle
\begin{abstract}
We give characterizations of the bounded subanalytic $\mathscr{C}^\infty$ submanifolds $M$ of $\mathbb{R}^n$ for which the space of Neumann regular functions is dense in Sobolev spaces.  
	By ``Neumann regular function'', we mean a function which is smooth at almost every boundary point and whose gradient is tangent to the boundary. In the case $p\in [1,2]$, we prove that the Neumann regular  elements of $\mathscr{C}^\infty(\overline{M})$  are dense in $W^{1,p}(M)$ if and only if $M$ is connected at almost every boundary point. In the case $p$ large, we show that the Neumann regular Lipschitz elements of $\mathscr{C}^\infty(M)$  are dense in $W^{1,p}(M)$ if and only if $M$ is connected at every boundary point. The proof  involves the construction of Lipschitz Neumann regular partitions of unity, which is of independent interest.
	\end{abstract}
	\section{Introduction}
Recent works on Sobolev spaces of subanalytic domains and manifolds \cite{poincfried, poincwirt,trace,lprime} point out that, in spite of the fact that these sets may admit non metrically conical singularities, they constitute a nice category to investigate partial differential equations. Let us recall that the subanalytic category comprises all the sets defined by equalities and  inequalities on analytic functions (see section \ref{sect_sub}), offering a level of generality which is valuable for applications.

Applications of the aforementioned works to PDEs were given in \cite{gupel} where it was shown that the classical weak formulations of the Laplace equation under Dirichlet or Neumann (or mixed) condition go over the framework of subanalytic domains, not necessarily Lipschitz. It is sometimes useful to give weak formulations only involving Neumann regular functions, which requires to show the density of these functions in $W^{1,2}(\omd)$, for a given  domain $\omd$. This fact, which was established on smooth domains and on  Lipschitz  simplicial complexes  \cite{droniou}, is not true on any Lipschitz domain (see \cite{droniou}).  Theorem \ref{thm_p_petit} of this article entails that this density holds  on all subanalytic Lipschitz bounded domains, and  more generally, given a subanalytic bounded $\cc^\infty$ submanifold $M$ of $\R^n$:
	\begin{cor}\label{cor_2}
For $p\in [1,2]$,   the space
 $$\{u\in \cc^\infty (\mba) : u \mbox{ is Neumann regular}  \}$$
 is dense in $W^{1,p}(M)$ if and only if $M$ is connected at $\hn^{m-1}$ almost every boundary point.
\end{cor}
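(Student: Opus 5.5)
The plan is to prove the two implications separately; the substantive one is sufficiency. Recall that $M$ is \emph{connected at} $x\in \delta M:=\mba\setminus M$ if the germ of $M$ at $x$ is connected, i.e.\ $B(x,\ep)\cap M$ is connected for all small $\ep>0$.

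\textbf{Necessity.} I argue by contraposition. Suppose the set $E$ of boundary points at which $M$ is not connected has positive $\hn^{m-1}$ measure. By subanalytic stratification (Section \ref{sect_sub}), $E$ then contains an $(m-1)$-dimensional stratum $S$. Shrinking $S$, I arrange that near each point of $S$ the manifold $M$ splits into at least two connected components $M_1, M_2$. Each $M_i$ is a Lipschitz manifold with boundary along $S$, after the normal form of subanalytic germs along a top-dimensional stratum. Now consider $u\in\cc^\infty(\mba)$. Since it is continuous on $\mba$, its traces from $M_1$ and from $M_2$ on $S$ coincide. Traces on $S$ are continuous $W^{1,p}(M_i\cap U)\to L^p(S\cap U)$ for $p\ge 1$, because $M_i$ is Lipschitz near $S$. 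Hence $\tra_{M_1}u-\tra_{M_2}u=0$ passes to the $W^{1,p}(M)$-closure. I then pick $u\in W^{1,p}(M)$ equal to $\varphi$ on $M_1$ and $0$ on $M_2$, where $\varphi$ is a bump function centered at a point of $S$. Its traces on $S$ differ, so it is not in the closure. Notice that this direction does not use the Neumann condition at all: even $\cc^\infty(\mba)$ fails to be dense.

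\textbf{Sufficiency.} This is where I plan to invoke Theorem \ref{thm_p_petit}, which the paper announces as entailing this corollary. I expect it to state that, for $p\in[1,2]$ (or under a hypothesis covering this range), Neumann regular $\cc^\infty(\mba)$ functions are dense in $W^{1,p}(M)$ whenever $M$ is connected at almost every boundary point. If its statement is instead phrased via a density of Neumann regular functions in the closure of $\cc^\infty(\mba)$, I add the known fact for subanalytic manifolds that $\cc^\infty(\mba)$ is dense in $W^{1,p}(M)$ iff $M$ is connected at almost every boundary point for $p<\infty$ in the relevant range (from \cite{trace} or \cite{lprime}). Combining the two gives the corollary.

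\textbf{Proving the density directly.} If Theorem \ref{thm_p_petit} has to be proved, I would proceed as follows.
\begin{enumerate}
\item Approximate $u$ by $v\in\cc^\infty(\mba)$.
\item Correct $v$ near the $(m-1)$-dimensional boundary strata, where $M$ is locally a one-sided Lipschitz manifold. I use a Lipschitz Neumann regular partition of unity and local reflection (e.g.\ replacing $v$ by a function constant along a collar normal field in a thin layer). The error is small in $W^{1,p}$ as the layer thickness tends to $0$.
\item Handle the strata of dimension $\le m-2$ by capacity cutoffs.
\end{enumerate}

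\textbf{Main obstacle.} The hardest step is the cutoff in step 3. The restriction $p\le 2$ enters exactly here: sets of codimension $\ge 2$ in $\mba$ have zero $p$-capacity relative to the $m$-dimensional $M$ precisely when $p\le 2$. Making this work requires metric estimates along possibly non-conical subanalytic singularities, uniformly in the layer.
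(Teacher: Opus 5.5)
Your overall route is the paper's: necessity by a bump function on one local component, whose two traces differ so that it is not even in the closure of $\cc^\infty(\mba)$, and sufficiency as a special case of Theorem \ref{thm_p_petit}. However, the sufficiency half is not actually carried out. You guess that the theorem is stated with ``almost every boundary point'', which is essentially the corollary itself. In fact it involves parameters $Z,A$ with $A\supset\delta M\setminus\Gamma$, $\dim A\le m-2$, $p\le m-\dim A$, and requires connectedness at \emph{every} point of $\Gamma\setminus(Z\cup\adh A)$.

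The missing deduction goes as follows.
\begin{enumerate}
\item Take $Z=\emptyset$ and $A=\delta M\setminus\Gamma$. Then $\dim A\le m-2$ by \eqref{eq_pa1} together with $\dim(\pa M\setminus\Gamma)\le m-2$ (because $\Gamma=(\pa M)_{reg}$), so $[1,2]\subset[1,m-\dim A]$.
\item Since $\Gamma$ is open in $\delta M$, the set $A$ is closed, and the hypothesis of the theorem becomes connectedness at every point of $\Gamma$.
\item Connectedness along $\Gamma$ is equivalent to connectedness at $\hn^{m-1}$-almost every boundary point. One direction holds because $\hn^{m-1}(A)=0$. For the other, if $M$ is disconnected at $x_0\in\Gamma$, its germ there is a union of at least two smooth manifolds with common boundary $\Gamma$. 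Hence $M$ is disconnected at every point of a neighbourhood of $x_0$ in $\Gamma$, a set of positive measure.
\end{enumerate}

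The same observation simplifies your necessity step. A bad set of positive measure must meet $\Gamma$, so you need neither the definability of $E$ nor a stratum inside it. Keeping a direct trace argument valid for all of $\cc^\infty(\mba)$ is indeed right here: the theorem's ``only if'' concerns the smaller space of functions vanishing near $A$.

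Your fallback direct sketch would fail in the order you give it.
\begin{itemize}
\item After step 2 the function is Neumann regular near $\Gamma$. A capacity cutoff $\chi$ near $\delta M\setminus\Gamma$ in step 3 has, in general, a gradient transverse to $\Gamma$, so $\chi v$ is no longer Neumann regular.
\item A Lipschitz Neumann regular partition of unity as in Proposition \ref{pro_p} is not $\cc^\infty$ at $\delta\Gamma$, so using it while $v$ is still nonzero there takes you out of $\cc^\infty(\mba)$. Example \ref{exa_pgd} shows that smoothness and Neumann regularity at singular points genuinely conflict.
\end{itemize}
The paper uses the opposite order and needs no thin-layer estimate. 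First, Theorem \ref{thm_dense_lprime} (where the capacity argument and the threshold $p\le m-\dim A$ live) reduces the problem to $u\in\cc^\infty(\mba)$ vanishing near $\adh A$. Then $w:=\lambda\cdot(u\circ\pi_\Gamma)$, extended by $0$ near $\delta M\setminus\Gamma$, is smooth and Neumann regular, and $u-w$ has zero trace on $\pa M$. Applying Theorem \ref{thm_dense_lprime} again, with $Z=\pa M$, approximates $u-w$ by elements of $\cc_0^\infty(M)$, which are trivially Neumann regular.
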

By ``connected at a boundary point $\xo$'', we   mean that $\bou(\xo,\ep)\cap M$ is connected for $\ep>0$ small.  Since we put no regularity assumption except subanalicity, this set may consist of more than one (but finitely many) connected components, like for instance  if $M=\{(x,y,z)\in  (0,1)^3: x^2+y^2<z^2\}$ and $\xo=(0,0,0)$. Lipschitz domains are clearly connected at every boundary point.
 Theorem \ref{thm_p_petit}  provides a more general result,  showing the density for all $p$ not greater than the codimension of the singularities of the frontier (which is always at least $2$). In particular, when the boundary of a domain of $\R^3$ has isolated singularities, this density holds up to $p=3$. This theorem also yields the density of  Neumann regular smooth functions vanishing near a subset $Z$ of the boundary in the space of functions having trace $0$ on $Z$, generalizing other results of \cite{droniou} that are useful to handle weak formulations of PDEs under mixed Dirichlet-Neumann conditions.

%It seems that one could spare the connectedness assumption by replacing $\cc^\infty(\mba)$ with $W^{1,\infty}(M)$. The above theorem however provides more information in many cases. For instance, when $\dim \delta M\setminus \pa M=0$ then this is true for $W^{1,p}(M)$, $p\in [1,3]$.
% We can spare, which identifies the respective Sobolev spaces.
Example \ref{exa_petit} shows that these results cannot be generalized to all $p$, as density of smooth functions itself fails. In the case ``$p$ large'', where  the density  of $\cc^\infty(\mba)$ holds (Theorem \ref{thm_trace}, again up to connectedness at frontier points),
we  establish the density of Lipschitz $\cc^\infty$ Neumann regular functions (Theorem \ref{d}).  We construct for this purpose Lipschitz Neumann regular partitions of unity (Proposition \ref{pro_p}), which is of its own interest as it can be
useful to localize weak formulations of PDEs.
We give a counterexample to show that $\cc^2(\mba)$ Neumann regular functions may fail to be dense in $W^{1,p}(M)$ for arbitrarily large values of $p$ even if $M$ is a Lipschitz subanalytic domain of $\R^n$. This example also shows that the Neumann regular partitions of unity given by Proposition \ref{pro_p} could not be required to be restrictions of $\cc^2$ functions on $\R^n$.

%Theorem \ref{thm_trace} showing that density of smooth functions holds for $p$ large (when the manifold is connected at boundary points), it is therefore natural to wonder if Neumann regular functions are also  dense for $p$ large. We address this issue in the next section.

	\section{Some notations and conventions}\label{sect_notations}
	Throughout this article, the letter $M$ stands for a bounded subanalytic $\cc^\infty$ submanifold of $\R^n$. We will use the following notations.

	\begin{itemize}
	 \item 	 $\bou(x,\ep)$, open  ball in $\R^n$  of radius $\ep$   centered  at $x\in \R^n$ (for the  euclidean distance).

% 		\item $\sph(x,\ep)$, sphere    of radius  $\ep>0$ centered at   $x\in  \R^n$. As customary, $\sph(0_{\R^n},1)$ will however be denoted $\sph^{n-1}$, $0_{\R^n}$ being the origin of $\R^n$.

		 	\item 	$x\cdot y$  and $|x|$,  euclidean inner product of $x$ and $y$ and euclidean norm of $x$.

		 \item 	$\adh A$, closure of a set $A\subset \R^n$. We also set $\delta A:= \adh A \setminus A$.
		 \item $\pa M$, Lipschitz regular locus of $M$ (section \ref{sect_sub}).
		 \item $\Gamma$, smooth part of $\pa M$ (section \ref{sect_sub}).

	\item $D_xh$   derivative of a mapping $h$ at a point $x$.  As customary, we however write $h'(x)$ whenever $h$ is a one-variable function.

% 	\item 	$r \cdot A$, image of $A\subset \R^n$ under the mapping $x\mapsto rx$, if $r\in \R$.

% 		\item $<u,v>$, $L^2$-inner product of $\omd$, i.e. $\int_\omd uv$. %We write $<u,v>_U$ for the $L^2$-inner product of $U$, if $U$ is a manifold

	\item 	$\nabla u$, gradient of $u:M \to \R$.

	\item 	Given   $p\in [1,\infty)$, $W^{1,p}(M)$ stands for the Sobolev space of $M$, i.e. $$W^{1,p}(M):= \{u\in L^p(M),\; |\nabla u| \in L^p(M)\},$$ and, given an open subset $Z$ of $\pa M$,
	$$W^{1,p}(M,Z):= \{u\in W^{1,p}(M), \tra_{\pa M} u=0\; \mbox{on $Z$}\},$$
	where $\tra_{\pa M} $ is the trace operator defined in section \ref{sect_tra}.
%   We denote by $\supp\, u$ the support of a distribution $u$ on $M$, and, given an open subset $ Z$  of $\mba$ containing $M$, we write $\supp_Z u$ for  the {\bf support of $u$ in $Z$}, defined as the closure in $Z$ of $\supp\, u$.

	% Since the elements of  $W^{1,p}(\omd,\pa \omd)$ can  be extended by $0$ to elements of $W^{1,p}(\R^n)$ \cite[Proposition $4.5$]{lprime}, we will sometimes regard them as defined on $\R^n$. %We will write $W_0^{1,p}(\omd)$ for the closure of $\cc_0^\infty(\omd)$ in $W^{1,p}(\omd)$.
% 	Given a measurable function $\rho$ on $\omd$, we  set
% 	$$||u||_{L^p_\rho(\omd)}:=\left(\int_\omd |u|^p\rho\right)^{1/p}.$$

\item $\cc^{0,1}(A)$, space of Lipschitz functions on $A\subset \R^n$ (w.r.t. the euclidean norm).
 	\item 	%$\supp\, u$,  support of a distribution $u$ on $\omd$.
 We will regard $\cc^\infty(\mba)$ as a subset of $W^{1,p}(M)$.  In particular, for $u\in \cc^\infty(\mba)$,  $\supp\, u$ (the support of $u$) will be a subset of $M$ and
   we set for $Z$ open in $\mba$ $$\cc^\infty_{Z}(\mba):=\{u\in \cc^\infty(\overline{M}):\overline{ \supp\, u} \subset Z  \}.$$

% 	\item 	$\s_n$, set of globally subanalytic subsets of $\R^n$ (section \ref{sect_sub}).

%  \item 	$\D^+(S)$, set of positive definable continuous functions on $S\subset \R^n$ (section \ref{sect_sub}).

% 	\item 	$A_t$, fiber of $A\in \s_{m+n}$ at $\tim$ (section \ref{sect_pf_vshort}).

% 	  	\item 	$\cbf_t(A)$ (resp. $\cbf^S_t(A)$), tangent cone (resp. normal cone) of $A\subset \R^n$ at $t\in A$ (section \ref{sect_a_priori}, resp. section \ref{sect_normal_cone}). 	$\cbt_t(A)$ (resp. $\cbt^S_t(A)$) will stand for the link of the tangent (resp. normal) cone (section \ref{sect_a_priori}, resp. section \ref{sect_normal_cone}).

	\item 	$d(x,S)$,  euclidean distance from $x\in \R^n$ to  $S\subset \R^n$.% The function $x\mapsto d(x,S)$ is  denoted $d(\cdot, S)$.

	 	\item   If $S$ is a definable $\cc^\infty$ submanifold of $\R^n$ then there is a definable  neighborhood $U_S$ of $S$ on which  $\rho_S:x\mapsto d(x,S)^2$ is $\cc^\infty$ and there is a $\cc^\infty$ retraction $\pi_S:U_S\to S$ which assigns to every $x\in U_S$ the unique point that realizes $d(x,S)$, i.e. $|x-\pi_S(x)|=d(x,S)$ (see for instance \cite[Proposition $2.4.1$]{livre}).
	  We then set for $\mu$ positive continuous function on $S$
	  \begin{equation}\label{eq_tubular}
	  	U_S^\mu :=\{x\in U_S :d(x,S)<\mu( \pi_S(x))\}.
	  \end{equation}
We will also write $U_S^\mu$ when $\mu$ is a positive number, identifying it with the constant function. We then have for $x$ sufficiently close to $S$
\begin{equation}\label{eq_ker_dpi_nabla_drho}
\ker D_x \pi_S =T_{\pi_S(x)} S^\perp\; \et\; \nabla \rho_S(x)=2(x-\pi_S(x)).
\end{equation}

		\item $\hn^k$, $k$-dimensional Hausdorff measure.% and Hausdorff distance. and $d_\hn(\cdot, \cdot)$

	 	\item  Given two nonnegative functions $\xi$ and $\zeta$ on a set $E$ as well as a subset $Z$ of $E$, we write ``$\xi\lesssim \zeta$ on $Z$'' or  ``$\xi(x)\lesssim \zeta(x)$ for $x\in Z$'' when there is a constant $C$ such that $\xi(x) \le C\zeta(x)$ for all $x\in Z$. %We will however sometimes specify  constants, when useful.

% 	\item Given a  mapping $h:M\to M'$ between differentiable submanifolds,  $M\subset\R^n$ and $M'\subset \R^k$, differentiable at $x\in M$, with $\dim M \ge \dim M'$, we write $\jac h(x)$ for the (generalized) jacobian  $\sqrt{\det D_xh D_xh^\mathbf{t}}$, where $D_x h^\mathbf{t}$ denotes the transposite of $D_xh$. We   recall the  {\bf co-area formula}, which asserts that when
% 	 $h$ is Lipschitz and $m:=\dim M\ge m':=\dim M'$  we have for $u\in L^1(M)$:
% 	\begin{equation}\label{eq_coarea}
% 		\int_{y \in M'}\left( \int_{h^{-1}(y)}u(x)\, d\hn^{m-m'}(x) \,\right) d y  =\int_{x\in M } u(x)\,\jac h(x)\, d x .
% 	\end{equation}
%where .
	 %(defined at the points where $h$ is differentiable).

 	\item  We fix a decreasing  $\cc^\infty$ function $\psi:\R \to [0,1]$   
such that   \begin{equation}\label{eq_psi}
	 \psi\equiv 1 \quad \mbox{ on } \quad (-\infty,\frac{1}{2}), \quad\mbox{ and } \quad\psi\equiv 0\quad \mbox{ on } \quad[\frac{3}{4},\infty).
\end{equation}
	 \end{itemize}

\subsection{Subanalytic sets.}\label{sect_sub} We provide a few definitions in this section.	We refer the reader to  \cite{bm, ds, livre} for  basic facts about subanalytic geometry.

	\begin{dfn}\label{dfn_semianalytic}
		A subset $E\subset \R^n$ is called {\bf semi-analytic} if it is {\it locally}
		defined by finitely many real analytic equalities and inequalities. Namely, for each $a \in   \R^n$, there are
		a neighborhood $U$ of $a$ in $\R^n$, and real analytic  functions $f_{ij}, g_{ij}$ on $U$, where $i = 1, \dots, r, j = 1, \dots , s_i$, such that
		\begin{equation}\label{eq_definition_semi}
			E \cap   U = \bigcup _{i=1}^r\bigcap _{j=1} ^{s_i} \{x \in U : g_{ij}(x) > 0 \mbox{ and } f_{ij}(x) = 0\}.
		\end{equation}

		The flaw of the  semi-analytic category is that  it is not preserved by analytic morphisms, even when they are proper. To overcome this problem, we prefer working with the  subanalytic sets.

		A subset $E\subset \R^n$  is  {\bf  subanalytic} if
		each point $x\in\R^n$ has a neighborhood $U$ such that $U\cap E$ is the image under the canonical projection $\pi:\R^n\times\R^k\to\R^n$ of some relatively compact semi-analytic subset of $\R^n\times\R^k$ (where $k$ depends on $x$).

		A subset $Z$ of $\R^n$ is  {\bf globally subanalytic} if $\hh_n(Z)$ is   subanalytic, where $\hh_n : \R^n  \to (-1,1) ^n$ is the homeomorphism defined by $$\hh_n(x_1, \dots, x_n) :=  (\frac{x_1}{\sqrt{1+|x|^2}},\dots, \frac{x_n}{\sqrt{1+|x|^2}} ).$$

		We say that {\bf a mapping $f:A \to B$ is   globally subanalytic}, $A \subset \R^n$, $B\subset \R^m$ globally subanalytic, if its graph is a   globally subanalytic subset of $\R^{n+m}$. For simplicity, globally subanalytic sets and mappings will be referred as {\bf definable} sets and mappings.
We denote by $\s_n$ the set of definable subsets of $\R^n$.% and by $\D^+(S)$, $S\in \s_n$, the space of positive continuous definable functions on $S$.%  (this terminology is often used by o-minimal geometers \cite{vdd,cos})In the case $B=\R$, we say that  $f$ is a (resp. globally) {\bf  subanalytic function}.
\end{dfn}

The globally subanalytic category is well adapted to our purpose.  It is stable under intersection, union, complement, and projection. It thus constitutes an o-minimal structure \cite{vdd, cos} and consequently admits cell decompositions (see \cite[Definition $2.4$]{cos} \cite[Definition $1.2.1$]{livre}), from which it follows that definable sets enjoy a large number of finiteness properties (see \cite{cos,livre} for more).

Given $A\in \s_n$, we denote by $A_{reg}$  the set of points of $A$ at which $A$ is a $\cc^\infty$ submanifold of $\R^n$ of dimension $\dim A$. It follows from Tamm's theorem \cite{tamm} that this set is definable, and it easily follows from existence of cell decompositions \cite[Theorem $1.2.3$]{livre} that it  satisfies $\dim A \setminus A_{reg}<\dim A$.   We set  $\Gamma:=(\pa M)_{reg}$, where $\delta M :=\adh M\setminus M$.
% $$\pa \omd :=\{x\in \delta \omd:  \omd \mbox{ is a $\cc^\infty$ of $\R^n$ at $x$}  \}. $$that have a  neighborhood $U$ in $\R^n$ such that each connected component of $U\cap \omd$ is the interior of a  with boundary $U\cap \delta \omd$.
%$\delta \omd$
\begin{dfn}\label{dfn_stratifications}
	A {\bf
		stratification of}\index{stratification} a definable set $X\subset \R^n$ is a finite partition of it into
	definable $\cc^\infty$ submanifolds of $\R^n$, called {\bf strata}\index{stratum}. We say that a stratification $\Sigma$ of a set $X$ {\bf satisfies the frontier condition} if the closure in $X$ of every  $S\in \Sigma$ is a union of strata of $\Sigma$.\end{dfn}
Existence of stratifications satisfying the frontier condition follows from existence of Whitney $(b)$  regular  stratifications (see \cite[Proposition $2.6.17$]{livre}), which is well-known since \L ojasiewicz's work \cite{loj} (see also \cite[Propositions $2.6.6$ and $2.6.8$]{livre}).

We will say that $M$ is {\bf Lipschitz regular} at $x\in \delta M$ if this point has a neighborhood $U$ in $\R^n$ such that each connected component of $U\cap M$ is the interior of a Lipschitz manifold with boundary $U\cap \delta M$.
 We then let:
$$\pa M:=\{x\in \delta M: M\mbox{ is Lipschitz regular at $x$}\}.$$
It directly follows from existence of bi-Lipschitz trivial stratifications  \cite{pa,  lipsomin, halyin, paramreg} (or \cite[Corollary 3.2.17]{livre}) that this set is subanalytic. Moreover, we have
\begin{equation}\label{eq_pa1}\dim \left(\delta M \setminus  \pa M\right)\le m-2.\end{equation}
 This fact follows from a famous result  which is sometimes referred as {\it Wing Lemma} by geometers (see \cite[Proposition 1, section 19]{loj}, \cite[Proposition 9.6.13]{bcr}, or \cite[Lemma 5.6.7]{livre}).

 \begin{dfn}\label{dfn_embedded}
	We say that $M$ is {\bf connected at $x\in \delta M$}  if  $\bou(x,\ep)\cap M$ is connected for all  $\ep>0$ small enough. We say that $M$ is {\bf connected along $A\subset \delta M$} if  it is connected at every  $x\in A$, and that it is {\bf normal}   if it is connected along  $\delta M$.% (resp. {\bf weakly normal})(resp. $\hn^{m-1}$-almost every)
 \end{dfn}
	\section{Density of $\cc^\infty(\R^n)$}\label{sect_tra}
 At a point of $\pa M$, the germ of $M$ has finitely many connected components (bounded independently of the point, say at most $l$) which are Lipschitz manifolds with boundary.  Therefore, the restriction of every $u\in W^{1,p}(M)$, $p\in [1,\infty)$,  leaves a trace on $\pa M$, which is locally $L^p$, and we can define a trace operator $\tra_{\pa M}:W^{1,p}(M)\to L^p_{loc}(\pa M)^l$, which assigns to every $u$ its  traces provided by the respective restrictions of $u$ to the  local connected components of $M$ (when $M$ has less than $l$ connected components near a point, the other component functions of the trace are by convention $0$, see \cite{trace, lprime} for more).  This mapping indeed depends on the way the  connected components are enumerated, but we assume this choice to be made once for all, as a change of enumeration anyway just induces a permutation of the components of $\tra_{\pa M}$.
 Given an open subset $Z$ of $\pa M$, we then set
 $$W^{1,p}(M,Z):=\{u\in W^{1,p}(M):\tra_{\pa M} u =0 \mbox{ on } Z\}.$$
 It was shown in \cite[Theorem $4.1$]{lprime} that the elements of this space can be approximated by smooth functions (up to the boundary) vanishing in the vicinity of $\adh Z$ for $p$ not too big. More precisely:

\begin{thm}\label{thm_dense_lprime}Let $Z$ be a definable open subset of $\pa M$ and let $A$ be a definable subset of $ \delta M$ containing $\delta M\setminus \pa M$ and $\delta Z$, with   $\dim A\le m-2$. If $M$ is connected along $\pa M\setminus (Z \cup \adh A )$  then for all $p\in [1,m- \dim A]$ not infinite,    $\cc^\infty_{\mba\setminus (Z\cup \adh{A})}(\mba)$ is dense in $W^{1,p}(M,Z)$.
\end{thm}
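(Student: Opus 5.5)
The plan is to reduce to approximating bounded functions supported away from $\adh A$, and then to localize. Since truncations $\max(-N,\min(u,N))$ converge to $u$ in $W^{1,p}(M)$ and keep the condition $\tra_{\pa M}u=0$ on $Z$, I may assume $u\in L^\infty(M)$.

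\textbf{Step 1: removing $\adh A$.} Set $k:=m-\dim A\ge 2$, so $p\le k$. I will build cut-off functions $\phi_\ep$ equal to $1$ near $\adh A$, supported in $\{d(x,A)<\ep\}$, and satisfying $\|\nabla\phi_\ep\|_{L^p(M)}\to 0$.
\begin{itemize}
\item For $p<k$, take $\phi_\ep(x):=\psi(d(x,A)/\ep)$, with $\psi$ as in (\ref{eq_psi}).
\item For $p=k$, take the logarithmic cut-off $\phi_\ep(x):=\psi\bigl(\log(d(x,A)/\ep^2)/\log(1/\ep)\bigr)$.
\end{itemize}
The needed input is the definable tube estimate
$$\hn^m\bigl(M\cap\{d(x,A)<r\}\bigr)\lesssim r^{k}\quad\mbox{for small } r.$$
It follows from the finiteness properties of definable sets, for example by a cell decomposition compatible with $A$ together with the fact that $M$ has bounded $\hn^m$-density. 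With this estimate, $\int_M|\nabla\phi_\ep|^p\lesssim \ep^{k-p}$ when $p<k$. When $p=k$, summing over dyadic shells gives $\int_M|\nabla\phi_\ep|^p\lesssim (\log 1/\ep)^{1-p}$, which tends to $0$ because $p\ge 2>1$. Since $u$ is bounded, $\|u\phi_\ep\|_{W^{1,p}}\le \|u\phi_\ep\|_{L^p}+\|u\|_\infty\|\nabla\phi_\ep\|_{L^p}+\|\phi_\ep\nabla u\|_{L^p}\to0$. Multiplication by $1-\phi_\ep$ preserves the vanishing of the trace on $Z$, so I may further assume $\supp\,u$ stays at positive distance from $\adh A$. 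I expect this step, and specifically the borderline case $p=m-\dim A$, to be the main obstacle: the volume estimate must hold uniformly near all of $\adh A$, including near $\delta M\setminus\pa M$ where $M$ is singular.

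\textbf{Step 2: localization.} The set $K:=\adh{\supp\,u}\cap\delta M$ is compact and contained in $\pa M\setminus\adh A$. Because $\delta Z\subset A$, every $x\in K$ lies either in $Z$ or in $\pa M\setminus(Z\cup\adh A)$, and the same alternative holds on a whole neighborhood of $x$ in $\pa M$. I will take a finite smooth partition of unity of $\R^n$ subordinate to small balls around the points of $K$, plus one open set $V\Subset\R^n\setminus\delta M$. The piece supported in $V$ is compactly supported in $M$ and is approximated by ordinary mollification on the manifold $M$.

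\textbf{Step 3: local approximation near boundary points.} Let $x\in K$ and let $U$ be a small ball around it.
\begin{itemize}
\item If $x\notin Z$, then $M$ is connected at $x$ and Lipschitz regular there. Hence $U\cap M$ is a single Lipschitz manifold with boundary, bi-Lipschitz to a half-ball via a definable chart. In that chart I extend $u$ by reflection, push the result slightly inward (a translation transverse to the boundary), and mollify. This produces functions that extend smoothly across $U\cap\delta M$ and converge to $u$ in $W^{1,p}$. Connectedness is exactly what guarantees a single-valued limit on $\mba$; with several local components the approximants could not be restrictions of functions in $\cc^\infty(\mba)$. To get genuine $\cc^\infty(\mba)$ functions and not merely Lipschitz ones, I will perform the mollification in a smooth ambient tubular neighborhood, using $\pi$ onto a smooth manifold extending $M$ near $x$, rather than in the bi-Lipschitz chart.
\item If $x\in Z$, each of the finitely many local components is a Lipschitz manifold with boundary, and the corresponding component of the trace of $u$ vanishes there. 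The classical Lipschitz-domain result then lets me extend $u$ by $0$ across the boundary, translate it into the interior, and mollify. This yields smooth functions vanishing near $\adh Z\cap U$.
\end{itemize}
Summing the local approximants gives an element of $\cc^\infty_{\mba\setminus(Z\cup\adh A)}(\mba)$ arbitrarily close to $u$.
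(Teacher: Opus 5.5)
The paper gives no proof of this statement; it is imported from the author's earlier work, so your argument has to stand on its own. Your truncation, the cut-off near $\adh A$ and the localization are sound. The tube bound $\hn^m(M\cap\{d(\cdot,A)<r\})\lesssim r^{m-\dim A}$ follows from two facts about definable sets: $A$ is covered by $\lesssim r^{-\dim A}$ balls of radius $r$, and $\hn^m(M\cap \bou(y,s))\lesssim s^m$ uniformly. With it, your linear and logarithmic cut-offs handle every $p\le m-\dim A$. So the borderline case is not where the difficulty lies.

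\textbf{The gap is in Step 3, at points $x\notin Z$.} The chart approximants $w\circ h$ ($h$ the bi-Lipschitz chart, $w$ smooth) are only Lipschitz on $\mba$, and your way of upgrading them to elements of $\cc^\infty(\mba)$ fails when $m<n$: the retraction onto ``a smooth manifold extending $M$ near $x$'' does not exist in general.

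\emph{Non-smooth boundary point.} Take the half-cone $M=\{(x,y,z): z=\sqrt{x^2+y^2},\ y>0,\ z<1\}$. The apex lies in $\pa M$, since projection to the $(x,y)$-plane is bi-Lipschitz onto a half-disk, and $M$ is connected there. But the tangent cone of $\mba$ at $0$ spans $\R^3$, so no $\cc^1$ surface contains $\mba$ near $0$.

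\emph{Point of $\Gamma$.} Take $M=\{(x,y,z): |x|<1,\ 0<y<1,\ z=y^{3/2}\}$. The segment $\{y=z=0\}$ lies in $\Gamma$, yet no $\cc^2$ surface contains $\mba$ near it.

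Such points cannot be pushed into Step 1. In the first example one may take $A=\emptyset$, so $p>2=m$ is allowed and the apex has positive $p$-capacity. In the second the bad set has dimension $m-1$. Mollifying an ambient Lipschitz extension also does not converge strongly on $M$ in general: for $m<n$ it gives no control of the tangential gradient on the null set $M$. For $m=n$ your step is fine: extend by McShane and mollify in $\R^n$.

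\textbf{What is missing} is a reason why Lipschitz functions on $\mba$ lie in the $W^{1,p}(M)$-closure of $\cc^\infty(\mba)$. Here is a cheap one. Let $F$ be a Lipschitz extension of $f$ to $\R^n$ and $F_\ep$ its mollifications.
\begin{enumerate}
\item The restrictions $F_\ep|_M$ are bounded in $W^{1,p}(M)$, because $|\nabla F_\ep|\le \mathrm{Lip}(F)$ and $\hn^m(M)<\infty$.
\item They converge uniformly to $f$, hence weakly in $W^{1,p}(M)$ for $1<p<\infty$.
\item The closure of the subspace $\cc^\infty(\mba)$ is closed and convex, hence weakly closed by Mazur's lemma, so it contains $f$.
\item The case $p=1$ follows from $p=2$ because $M$ is bounded.
\end{enumerate}
Multiplying by a cut-off supported in your small ball keeps the supports away from $Z\cup\adh A$. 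With this ingredient replacing the nonexistent smooth extension, Step 3 goes through.
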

Since one can always choose $A$ equal to the union of $\delta M\setminus \pa M$ and $\delta Z$, which are both of codimension $2$ in $\mba$, the above theorem ensures the density of $\cc_{\mba\setminus \adh{Z}}^\infty(\mba)$  in  $W^{1,2}(M,Z)$ as soon as $M$ is connected at $\hn^{m-1}$-almost every point of $\delta M\setminus \adh{Z}$, which is useful to produce weak formulations of PDEs. It is also worthy of notice that whenever $\dim \delta M \setminus \pa M\le m-3$, we obtain the density of $\cc^\infty(\mba)$ in $W^{1,p}(M)$ for all $p\in [1,3]$.
The following example shows that  this  is no longer true  for $p>3$.
\begin{exa}\label{exa_petit}
Let $k$ be an integer bigger than $1$. For $z\in (0,1)$, set
$$W_z:=(-2z,2z)\times (0,z^k)\subset \R^2$$
and
$$V_z:= \bou((-2z,0),z)\cup W_z\cup \bou((2z,0),z),$$
as well as (see picture)
$$\omd:= \{(x,y,z) \in \R^3: 0< z< 1, (x,y)\in V_z\}. $$

  \begin{figure}[h]
\includegraphics[ scale=0.4]{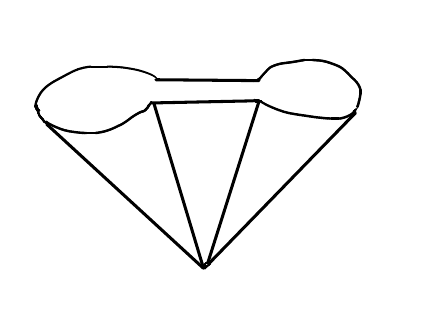}
\centering
  \caption{The germ of $\omd\subset \R^3$ at the origin. }%width=5cm, height=3cm,
\label{fig1}\end{figure}

Let for $(x,y,z)$ in $\omd$, $u(x,y,z):= \psi(\frac{x}{z})$, where $\psi$ is the function appearing in (\ref{eq_psi}). Since for all  $(x,y,z)$ in $\omd$, we have
\begin{equation}\label{eq_nabla_u}|\nabla u(x,y,z)| \lesssim |( \frac{1}{z},0,\frac{x}{z^2})|\lesssim \frac{1}{z},\end{equation}
and since in addition $\nabla u(x,y,z)=0$ for all $(x,y,z)\in\omd$ satisfying $(x,y)\notin  W_z$, we can write
$$\int_\omd |\nabla u|^p =  \int_0^1 \left(\int_{W_z} |\nabla u(x,y,z) |^pdxdy\right) dz\overset{(\ref{eq_nabla_u})}\lesssim \int_0^1 \frac{\hn^2(W_z)}{z^p} dz\lesssim \int_0^1 z^{k+1-p} dz, $$
which is finite if $p<k+2$. Hence, $u\in W^{1,p}(\omd)$ for such $p$.

Since $V_z$ is connected, $\omd$ is normal. We claim that $u$ cannot be approximated by elements of $\cc^\infty(\adh \omd)$ in the $W^{1,p}(\omd)$ norm  for any $p\in (3,k+2)$.
Assume otherwise, i.e. suppose that there is $v_j\in \cc^\infty(\adh\omd)$ such that $v_j\to u$ in
$W^{1,p}(\omd)$ with $p\in (3,k+2)$.
 Observe first that $u$ is $1$ on
 $$C_-:=0_{\R^3} *( \bou((-2,0),1)\times\{1\}), $$
 ($0_{\R^3} *A$ denoting the cone over a set $A$ at the origin of $\R^3$) and $0$ on
 $$C_+:=0_{\R^3} * (\bou((2,0),1)\times\{1\}) .$$
These two cones being Lipschitz domains, Morrey's embedding entails  that $v_j(0)$ tends to the limits at the origin of $u_{|C_-}\equiv 1$ and $u_{|C_+}\equiv 0$, which is clearly impossible.

%   The domain $\omd$ that we provided does not have isolated singularities. However, if we take $\omd':= \{(x,y,z)\in \omd: \rho(x,y,z)> \ep z^N \},$, where $\rho$ is a positive $\cc^\infty$ definable function vanishing on $\delta \omd$, $\ep>0$ small, $N\in \N$ large, we get an example which only has only a singularity at the origin and has the same properties (if $\ep>0$ is sufficiently small, by Sard Theorem,  it will be a regular value of $\frac{\rho}{z^N}$, so that the level $\{\frac{\rho}{z^N}=\ep\}$ will be smooth).The obstruction thus rather comes for the Lipschitz geometry of the domain and the curvature of the boundary.

\end{exa}
The above example shows that it is fairly easy to produce subanalytic open sets for which the space of functions that are smooth up to the boundary fails to be dense  in $W^{1,p}$  for some values of $p$. The case ``$p$ large'' was  studied in \cite{trace}, where the following positive result was obtained:

\begin{thm}\label{thm_trace}
Assume that $M$ is normal and let $A$ be a subanalytic subset of $\delta M$. For all $p\in [1,\infty)$ sufficiently large, we have:
\begin{enumerate}[(i)]
\item
$\cc^\infty(\adh{M})$ is dense in $W^{1,p}(M)$.
\item The linear operator
\begin{equation*}\label{trace}
\cc^\infty(\adh{M})\ni \varphi \mapsto \varphi_{|A}\in L^p(A,\hn^k), \qquad k:=\dim A,
\end{equation*}
is bounded for $||\cdot ||_{W^{1,p}(M)}$ and thus extends to a mapping $\tra_A:W^{1,p}(M)\to L^p(A,\hn^k)$.
\item If  $\mathcal{S}$ is a stratification of $A$, then $\cc^\infty_{\adh{M}\setminus\adh{A}}(\mba)$ is a dense subspace of
$\bigcap\limits_{Y\in\mathcal{S}}\ker \tra_Y$.

\end{enumerate}

\end{thm}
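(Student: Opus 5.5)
The plan is to reduce everything to the local structure of $M$ near frontier points provided by definable Lipschitz geometry, and then to glue with a partition of unity. I treat (i), (ii) and (iii) in that order, since (ii) uses the approximating sequences from (i), and (iii) uses both.

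For (i), I would first fix a stratification $\Sigma$ of $\mba$ with $M$ a union of strata and $\delta M$ a union of strata, refined so that it is bi-Lipschitz trivial along each stratum (the existence results cited before (\ref{eq_pa1})). Near a point $x_0\in\delta M$ lying in a stratum $S$, the germ of $M$ is then bi-Lipschitz equivalent, along $S$, to a product of $S$ with a definable germ whose link has finitely many connected components. The key local statement to prove is the following: if $M$ is connected at $x_0$, then for $p>m$ every $u\in W^{1,p}(M)$ has a continuous extension to $\adh M$ near $x_0$, with a H\"older modulus controlled by $\|u\|_{W^{1,p}}$. Morrey's inequality applied to the Lipschitz pieces gives this near points of $\pa M$. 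At singular strata, connectedness of $\bou(x_0,\ep)\cap M$ together with the definable Lipschitz structure (a curve selection and Lipschitz retraction argument in the spirit of the ``$L$-regular decompositions'') should make $M$ locally connected by rectifiable arcs of length $\lesssim |x-y|^{\theta}$. I would deduce this from the \L ojasiewicz inequality for the inner metric, $d_M(x,y)\lesssim |x-y|^{\theta}$ for some $\theta\in(0,1]$, which holds on normal definable sets. Morrey's argument along such arcs then yields a uniform H\"older bound once $p$ is large, say $p>m/\theta$. This is where ``$p$ sufficiently large'' enters, and I expect it to be the main obstacle, since Example \ref{exa_petit} shows that the exponent genuinely depends on the geometry.

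Given this, the density is proved as follows. I use the trivializations to build local Lipschitz ``pushing inward'' maps $h_\ep:M\to M$, with $h_\ep\to\mathrm{id}$ and bounded Lipschitz constants, that retract a neighbourhood of $\delta M$ into $M$ away from the boundary. Composing $u\circ h_\ep$ and mollifying inside $M$ produces functions that are continuous up to the boundary. I then glue with a partition of unity subordinate to the stratification. The continuity obtained above is what ensures that the resulting functions extend to the closure; in the non-normal case the traces from different local components would disagree, which is exactly why normality is needed. Finally, I apply a Whitney-type extension of definable functions that are $\cc^\infty$ on the strata, or a direct smoothing in a neighbourhood, to get elements of $\cc^\infty(\adh M)$.

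For (ii), I would prove the estimate $\|\varphi\|_{L^p(A,\hn^k)}\lesssim\|\varphi\|_{W^{1,p}(M)}$ for $\varphi\in\cc^\infty(\adh M)$ from the H\"older bound of the first step: $\sup_{\adh M}|\varphi|\lesssim\|\varphi\|_{W^{1,p}(M)}$ for $p$ large, because $M$ is bounded and normal and the bound is uniform. Since $\hn^k(A)<\infty$ for bounded definable $A$, the estimate follows, and density from (i) gives the extension $\tra_A$. For (iii), I take $u$ with $\tra_Yu=0$ for all $Y\in\mathcal S$; then $u$ is continuous on $\adh M$ and vanishes on $\adh A$. I multiply by $1-\psi(d(x,A)/\ep)$ and show that the error tends to $0$. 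The gradient term is bounded by $\ep^{-1}\int_{d(\cdot,A)<\ep}|u|^p$ up to constants, and $|u(x)|\lesssim d(x,A)^{\theta'}\|u\|$ by the H\"older estimate. Using $\hn^m(\{d(\cdot,A)<\ep\}\cap M)\lesssim\ep^{m-k}$ with $m-k\ge1$, this term tends to $0$ for $p$ large; a slightly finer truncation, using the fact that the H\"older norm of $u$ near $A$ can be made small, removes the borderline case. Approximating the resulting function, which now vanishes near $\adh A$, via (i) with support control completes the proof.
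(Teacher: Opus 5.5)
The paper does not prove this theorem; it quotes it from \cite{trace}. Judged on its own, your proposal has a genuine gap, and it is at the step you yourself flag as the main obstacle. The claim that $d_M(x,y)\lesssim|x-y|^\theta$ plus ``Morrey's argument along such arcs'' gives a uniform H\"older bound for $p>m/\theta$ is false, and Example \ref{exa_petit} refutes it.

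In that example $m=3$, and one checks directly that any two points of $\omd$ can be joined in $\omd$ by a path of length $\lesssim$ their Euclidean distance: the two solid cones are convex, and the channel can be crossed with controlled length. So $\theta=1$ for every $k$. Nevertheless, for $p\in(3,k+2)$ the function $\psi(x/z)$ lies in $W^{1,p}(\omd)$, equals $1$ on $C_-$ and $0$ on $C_+$, and has no continuous extension at the origin. Your threshold $m/\theta=3$ does not see $k$, whereas the true one is at least $k+2$.

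The reason is that a $W^{1,p}$ function is defined only a.e., so Morrey's argument needs a family of curves of positive $m$-dimensional measure. The relevant quantity is the \emph{thickness} of the region such curves sweep out, not their length. In the example, take $v$ smooth and average $|v(-2z,y,z)-v(2z,y,z)|\le\int_{-2z}^{2z}|\pa_x v(t,y,z)|\,dt$ over $0<y<z^k$, $r<z<2r$. This parameter set has measure $\sim r^{k+1}$, the segments fill a set of volume $\sim r^{k+2}$, and their endpoints lie in $C_-$ and $C_+$. The result is a bound $\lesssim r^{1-(k+2)/p}\|\nabla v\|_{L^p(\omd)}$, which is exactly where $k+2$ comes from.

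The missing idea is therefore a polynomial lower bound, at all scales near $\delta M$, on the measure of the families of curves joining nearby points. Such a bound could come, for instance, from a \L ojasiewicz-type control of the Jacobian of a definable Lipschitz parametrization of the connecting regions. It is the exponent of that degeneration, not the inner-metric exponent, that determines $p_0$.

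Even granting a correct local estimate, two further steps need repair.

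\textbf{The cut-off in (iii).} The gradient of $1-\psi(d(\cdot,A)/\ep)$ has size $\ep^{-1}$, so the error term is $\ep^{-p}\int_{\{d(\cdot,A)<\ep\}}|u|^p$, not $\ep^{-1}\int\cdots$. Combined with a global bound $|u|\lesssim d(\cdot,A)^{\theta'}$ and $\hn^m(\{d(\cdot,A)<\ep\})\lesssim\ep^{m-k}$, this gives $\ep^{m-k-p(1-\theta')}$, which gets worse as $p$ grows. For the Morrey exponent $\theta'=1-m/p$ it is $\ep^{-k}$. What you need is a localized, scale-invariant bound $\sup_B|u|\lesssim\ep^{1-m/p}\|\nabla u\|_{L^p(B')}$ on balls of radius $\sim\ep$ meeting $A$ (where $u$ vanishes). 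Summed over a covering with bounded overlap, this controls the error by $\|\nabla u\|^p_{L^p(\{d(\cdot,A)<C\ep\})}\to0$. That bound again requires the uniform geometric control just described.

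\textbf{The smoothing in (i).} A function that is continuous on $\mba$ and smooth on $M$ is not thereby approximable by restrictions of $\cc^\infty$ functions on $\R^n$. No Whitney extension theorem applies without jet conditions at the singular points. ``Smoothing in a neighbourhood'' presupposes a $W^{1,p}$ extension operator, which fails in general near cusps. A genuine argument is needed here, typically an induction over the strata of a stratification of $\delta M$.
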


In the above theorem, the expression  ``for $p\in [1,\infty)$ sufficiently large'' means that there is $p_0$ such that the claimed statement holds for all $p>p_0$ not infinite. This number $p_0$ depends on the geometry of the singularities of $\delta M$ and   not only on $\dim M$, as shown by the above example.

%%%%%%%%%%%%%%%%%%%%%%%%%%%%%%%%%%%%%%%

\section{Density of Neumann regular functions}
A vector $\nbf\in \R^n$ is said to be {\bf normal at $x\in \Gamma$} if it is orthogonal to $ T_x\Gamma$.
Given an open subset $V$ of $\mba$, we will say that a function $v:V\to \R$  is {\bf Neumann regular} if it is $\cc^\infty$ at points of $V\cap \Gamma$ and   $\nabla v(x) \cdot \nbf=0$, for every $x\in V\cap \Gamma$ and $\nbf\in \R^n$ normal at $x$. We are going to derive from the density results presented in the previous section the density of Neumann regular functions.

\subsection{The case $p$ small} We start with a density theorem that covers the case $p=2$ and hence yields Corollary \ref{cor_2}.

\begin{thm}\label{thm_p_petit}
 Let $Z$ be an open definable subset of $\pa M$ and let $A$ be a definable subset of $\delta M$ containing $\delta M\setminus \Gamma$ and $\delta Z$,  and satisfying $\dim A\le m-2$. For every $p\in [1,m-\dim A]$, the space
 $$\{u\in \cc^\infty_{\mba\setminus (Z\cup \adh{A})} (\mba) : u \mbox{ is Neumann regular}  \}$$
 is dense in $W^{1,p}(M,Z)$ if and only if $M$ is connected along $\Gamma \setminus (Z\cup \adh{A})$.
\end{thm}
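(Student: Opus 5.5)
The plan is to deduce the statement from Theorem \ref{thm_dense_lprime} by a local modification near $\Gamma$ that makes smooth functions Neumann regular at a small cost in $W^{1,p}$ norm. The necessity direction is handled separately.

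\emph{Sufficiency.} Assume $M$ is connected along $\Gamma\setminus(Z\cup\adh A)$. Since $A\supset \delta M\setminus\Gamma\supset\delta M\setminus \pa M$, Theorem \ref{thm_dense_lprime} applies. Hence it suffices to approximate a fixed $u\in \cc^\infty_{\mba\setminus(Z\cup\adh A)}(\mba)$ by Neumann regular functions in the same space. The set $K:=\adh{\supp\, u}\cap\delta M$ is then a compact subset of $\Gamma$, and near $K$ the closure $\mba$ is a $\cc^\infty$ manifold with boundary $\Gamma$. Connectedness at points of $\Gamma$ ensures there is a single local sheet, so no two-sided situation occurs.

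On a neighbourhood of $K$ in $\mba$ we use a collar given by the retraction $\pi_\Gamma$ and the normal coordinate $t$, the distance to $\Gamma$ inside $\mba$. For small $\ep>0$ we set
$$u_\ep:=\psi(t/\ep)\, (u\circ\pi_\Gamma)+\bigl(1-\psi(t/\ep)\bigr)u.$$
This function is $\cc^\infty$ up to $\Gamma$, coincides with $u\circ\pi_\Gamma$ for $t<\ep/2$, and therefore has gradient tangent to $\Gamma$ there, so it is Neumann regular. Its support stays away from $Z\cup\adh A$ for $\ep$ small.

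The error estimate runs as follows. We have $|u-u\circ\pi_\Gamma|\lesssim t$ on the collar, because $u$ is Lipschitz and $\supp u$ avoids a neighbourhood of $\adh A$. The gradient of the cutoff term is therefore bounded, and the $\nabla(u\circ\pi_\Gamma)$ terms are bounded as well. All these contributions live in the set $\{t<\ep\}\cap \supp u$, whose measure is $O(\ep)$, so $\|u_\ep-u\|_{W^{1,p}}\to 0$ for every $p<\infty$. Combined with Theorem \ref{thm_dense_lprime}, this gives density.

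\emph{Necessity.} Suppose $M$ is not connected at some $x_0\in\Gamma\setminus(Z\cup\adh A)$. Near $x_0$, the manifold $\Gamma$ is a smooth hypersurface of $\adh M$, and two local components $M_1,M_2$ of $M$ abut along it. Every $v\in\cc^\infty(\mba)$ is continuous at points of $\Gamma$, so $v$ has equal traces from $M_1$ and $M_2$ on a neighbourhood $W$ of $x_0$ in $\Gamma$. The trace operator is continuous on $W^{1,p}$ of each Lipschitz piece, so the closure of the space also satisfies $\tra(u|_{M_1})=\tra(u|_{M_2})$ on $W$. The function $u=\phi\cdot\mathbf 1_{M_1}$, with $\phi$ a cutoff near $x_0$, lies in $W^{1,p}(M,Z)$ but violates this identity, so density fails.

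\emph{Main obstacle.} The delicate point is the collar construction: checking that $\pi_\Gamma$ maps $\{t<\ep\}\cap\supp u_\ep$ into $\Gamma$ smoothly, and that the fibres $\pi_\Gamma^{-1}(y)\cap M$ are segments lying in $M$. This needs the one-sidedness of $M$ near $K$, which is exactly where connectedness is used. A secondary subtlety is whether $\Gamma$ should be defined as the smooth locus of $\pa M$, so that smooth approximants are continuous across $\Gamma$. If two sheets met along $\Gamma$ where $\mba$ is not a manifold with boundary, we would instead work with the normal bundle of $\Gamma$ in $\R^n$ and the $m$-dimensional manifold $\mba$; the same cutoff argument applies.
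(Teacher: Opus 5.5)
Your argument is correct, and your necessity part is the same as the paper's. For sufficiency you share the paper's first reduction: Theorem \ref{thm_dense_lprime} reduces everything to approximating $u\in\cc^\infty_{\mba\setminus(Z\cup\adh A)}(\mba)$. After that the two routes diverge.

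The paper never estimates anything. It takes $v=u\circ\pi_\Gamma$ and checks that $v$ vanishes on $V_\ep$, so that it extends by $0$ near $\delta M\setminus\Gamma$. It multiplies by a cutoff $\lambda\equiv 1$ near $\delta M$ to get a Neumann regular $w$ with $w=u$ on $\delta M$. It then invokes Theorem \ref{thm_dense_lprime} a second time, with $\pa M$ in place of $Z$, to approximate the zero-trace function $u-w$ by elements of $\cc^\infty_0(M)$.

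You instead localize near the compact set $K\subset\Gamma$ and prove convergence of the collar interpolation by hand. The key point is $|u-u\circ\pi_\Gamma|\lesssim d(\cdot,\Gamma)$, which cancels the $1/\ep$ coming from the cutoff. Your route is self-contained. It needs no extension across $\delta M\setminus\Gamma$, because compactness of $K$ keeps you uniformly away from $\adh A$. It also shows that this step works for every finite $p$, so the restriction $p\le m-\dim A$ enters only through the first reduction. The paper's route needs no quantitative bound on $u-u\circ\pi_\Gamma$, at the cost of a second appeal to the density theorem.

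Two minor points.
\begin{enumerate}
\item The set where $u_\ep\ne u$ is $\{t<\ep\}$ intersected with a fixed neighbourhood of $K$, not $\{t<\ep\}\cap\supp\, u$, since $u\circ\pi_\Gamma$ need not vanish where $u$ does. Its measure still tends to $0$, so the estimate is unaffected.
\item The ``main obstacle'' you flag is not really one. $\pi_\Gamma$ is the ambient closest-point retraction, smooth on a uniform tube around $K$, and $u$ extends smoothly to a neighbourhood of $\mba$. So the Lipschitz bound along the segment $[x,\pi_\Gamma(x)]$ needs no one-sidedness of $M$ and no information on the fibres of $\pi_\Gamma$. Connectedness enters only through Theorem \ref{thm_dense_lprime}, exactly as in the paper.
\end{enumerate}
Also, writing the cutoff as $\psi(\rho_\Gamma/\ep^2)$ rather than $\psi(t/\ep)$ makes its smoothness immediate.
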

\begin{proof}
Assume first that $M$ fails to be connected at some $\xo\in \Gamma \setminus (Z\cup \adh{A})$.  The germ of $M$ at $\xo$ is therefore the union of (at least two) $\cc^\infty$ manifolds with common boundary $\Gamma$. Let $\phi$ be a function which coincides with a compactly supported $\cc^\infty$ bump function on one of them and which is zero on the others. The restriction of $\phi$ to each connected component inducing different traces on an open subset of $\Gamma$, it cannot be approximated in $W^{1,p}(M)$ (for any $p$) by smooth functions on $\mba$ (which would have the same trace) by the usual trace theorem (for smooth manifolds with boundary).

To prove the converse, assume  that $M$ is connected along $\Gamma\setminus (Z\cup \adh{A})$, which entails that it is locally the interior of a smooth manifold with boundary at every  point of $\Gamma\setminus (Z\cup \adh{A})$. Fix $p\in [1,m-\dim A]$.  Let $U_\Gamma$  be a sufficiently small definable neighborhood of $\Gamma$ for the closest point retraction $\pi_{\Gamma}$ (onto $\Gamma$) to be defined and smooth.
By Theorem \ref{thm_dense_lprime}, it suffices to show that every $u\in \cc^\infty (\mba)$ vanishing near $Z\cup \adh{A}$ can be approximated arbitrarily close by $\cc^\infty$ Neumann regular functions also vanishing  near $Z\cup \adh{A}$. For such a function $u$, set for $x\in U_{\Gamma}$
$$v(x):= u(\pi_{\Gamma}(x)) .$$
 This function is $\cc^\infty$ and vanishes near $Z\cup \adh{A}$. Moreover, for every $x\in \Gamma$ and each normal vector $\nbf$ at $x$, we have
 $$D_x v(\nbf) = D_{\pi_{\Gamma}(x)} u \,D_x\pi_{\Gamma} (\nbf)\overset{(\ref{eq_tubular})}{=}0, $$
 which means that $v:U_\Gamma\to \R$ is Neumann regular.    We now need to extend $v$ to a neighborhood of $\delta M$.

%	 Set for simplicity $Y:=\delta M\setminus \Gamma$.
 We claim that for any sufficiently small positive real number $\ep$, the function $v$ is zero on 
 $$V_\ep:=\{x\in U_{\Gamma}^\ep: d(x,\delta M\setminus \Gamma)<\ep \}$$
 (see (\ref{eq_tubular}) for $U_\Gamma^\ep$).
 Indeed, for $x$ in this set
 $$d(\pi_{\Gamma}(x),\delta M\setminus \Gamma)\le |x-\pi_{\Gamma}(x)|+d(x,\delta M\setminus \Gamma)<2\ep.$$
As $u$ is zero near $Z\cup \adh{A}\supset \delta M\setminus \Gamma$, it  must vanish on $\{x\in U_\Gamma: d(x,\delta M\setminus \Gamma)<2\ep\}$ for $\ep>0$ small, which, by the above inequality, means that  $v$ is zero on $V_\ep$ for such $\ep$, as claimed.

We thus now may smoothly extend $v$   to $$U^\ep_{\Gamma}\cup \{x\in \R^n: d(x,\delta M\setminus \Gamma)<\ep\} $$ by setting $\tilde{v}(x):=v(x)$ on $U_\Gamma^\ep$ and $\tilde{v}(x):=0$ if $d(x,\delta M\setminus \Gamma)<\ep$. Note that  the just above displayed set  is a neighborhood of $\delta M$. Let $\lambda$ be a smooth compactly supported function on this neighborhood, equal to $1$ on a smaller neighborhood of $\delta M$, and set
$w:=\lambda \cdot \tilde{v}. $
Because $w=v$ near $\Gamma$, this function is Neumann regular. It therefore suffices to approximate  $(u-w)$  by Neumann regular  functions (the sum of two  Neumann regular functions is clearly Neumann regular).
But since $(u-w)$ has trace zero and $A$ contains $ \delta M\setminus \pa M$,  Theorem \ref{thm_dense_lprime} ensures  that this function can be approximated for each $p\in [1,m-\dim A]$ in the $W^{1,p}(M)$ norm by elements of $\cc_0^\infty(M)$.
\end{proof}

We gave a fairly general statement, the most interesting case is when $p=2$, where we get the density of Neumann regular smooth functions as soon as $M$ is connected at $\hn^{m-1}$ almost every point of $\delta M$ (Corollary \ref{cor_2}). This is obtained by applying the above theorem with $A=\delta M\setminus \Gamma$, which has codimension at least $2$ in $\mba$.

\begin{rem}
  When outward cusps arise, the trace of an element of $W^{1,p}(M)$ may fail to be $L^p$, for $p$ small. If $u\in W^{1,p}(M)$ is a function that has an $L^p$ trace, it is possible to see that the smooth approximations of $u$ provided by the proof of Theorem \ref{thm_p_petit} can be required to have a trace tending to the trace of $u$ in the $L^p$ norm. This is because the proof of Theorem \ref{thm_dense_lprime} indeed  gives such approximations.
\end{rem}

%%%%%%%%%%%%%%%%%%%%%%%%%%%%%%%%%%%%%%%%%%%%%%%%%%%%%%%%%%%%

\subsection{The case $p$ large.} Example \ref{exa_petit} shows that the conclusion of Theorem \ref{thm_p_petit} does not hold for all $p$. We establish in this section the density of {\it Lipschitz} Neumann regular functions (smooth on $M\cup \Gamma$) in  the case ``$p$ sufficiently large'', i.e. for $p$ bigger than some $p_0$ (which depends on $M$), when $M$ is connected at boundary points. We then provide an example  to show that the density of $\cc^2(\mba)$ Neumann regular functions may fail for arbitrary large values of $p$ (Example \ref{exa_pgd}).

\begin{thm}\label{d}
For every sufficiently large value of $p$, the space
$$\{u\in \cc^{0,1}(\mba)\cap \cc^\infty(M\cup \Gamma) :u \mbox{ is Neumann regular}\}$$
is dense in $W^{1,p}(M)$ if and only if $M$ is normal.
\end{thm}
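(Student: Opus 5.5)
The "only if" direction will follow the proof of Theorem \ref{thm_p_petit}, with one extra step for non-Lipschitz boundary points. Suppose $M$ is not connected at some $\xo\in\delta M$. If $\xo$ can be taken in $\Gamma$, the bump function supported on one local component has different traces on the two sides of $\Gamma$. Elements of $\cc^{0,1}(\mba)$ are continuous up to $\Gamma$, so their two traces agree, and the trace theorem forbids approximation. If $\xo\in\delta M\setminus\Gamma$, let $C_1,C_2$ be two distinct connected components of $\bou(\xo,\ep)\cap M$ for all small $\ep$. Take $u$ equal to $1$ on $C_1$ near $\xo$ and $0$ on the other components, cut off away from $\xo$; it lies in $W^{1,p}(M)$.

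For $p$ large, a Morrey-type argument as in Example \ref{exa_petit}, or Theorem \ref{thm_trace}(ii) applied to $A=\{\xo\}$ on each component separately, shows that $W^{1,p}$-convergence of continuous functions on $\mba$ forces convergence of their values at $\xo$ along each component. Components of subanalytic germs are connected by Lipschitz-in-the-inner-metric arcs (L-regular decomposition), so $W^{1,p}$ functions have a Hölder trace at $\xo$ on each component for $p$ large. A Lipschitz function on $\mba$ takes a single value at $\xo$, which cannot approach both $1$ and $0$.

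For the "if" direction, assume $M$ is normal and $p$ is large. By Theorem \ref{thm_trace}(i) it suffices to approximate a given $u\in\cc^\infty(\mba)$. I would build a Lipschitz Neumann regular partition of unity subordinate to a stratification, as in Proposition \ref{pro_p}.

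First, fix a stratification $\Sigma$ of $\delta M$ compatible with $\Gamma$. For each stratum $S\subset\delta M$, use the tubular retraction $\pi_S$ and the function $\rho_S$ to define cut-offs $\psi(\rho_S/\mu_S^2)$ on $U_S^{\mu_S}$. Modify them near lower-dimensional strata by induction on dimension, in the spirit of controlled tube systems. Arrange that on $U_\Gamma$ the functions are constant along the fibres of $\pi_\Gamma$, so Neumann regularity holds by (\ref{eq_ker_dpi_nabla_drho}).

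Next, set $w:=\sum_S \phi_S\cdot (u\circ\pi_S)$, with $\phi_S$ the partition functions and $\pi_S$ composed after suitable retractions onto $\mba$. Since $u$ is Lipschitz and the $\phi_S$ are Lipschitz, $w$ is Lipschitz and Neumann regular. The error $u-w$ is then bounded by $|u-u\circ\pi_S|\lesssim d(x,S)$ on each tube. With tubes of width $\to 0$, the errors go to $0$ in $L^p$. The gradient error is bounded but supported on sets of measure $\to 0$, so it also goes to $0$ in $L^p$ for any finite $p$.

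The main obstacle is keeping the partition of unity \emph{uniformly} Lipschitz, or at least with gradients of size $\lesssim 1/\text{width}$ against a numerator of size $\lesssim\text{width}$, near singular strata where tubes of different strata overlap and $\pi_\Gamma$ degenerates (its derivative blows up near $\delta M\setminus\Gamma$). I would try definable Lipschitz retractions (Lipschitz stratifications or bi-Lipschitz trivializations of \cite{livre}), with widths shrinking like a power of the distance to lower strata, and use Łojasiewicz inequalities to control $|D\pi_\Gamma|$. Here the large $p$ absorbs polynomial losses, and normality ensures that the functions $u\circ\pi_S$ are well defined across components.
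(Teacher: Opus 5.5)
\paragraph{Only if.} Your argument here is essentially the paper's: a function equal to $1$ on one local component at $\xo$ and $0$ on the others cannot be approximated, for $p$ large, by functions continuous at $\xo$. One correction: Theorem \ref{thm_trace}(ii) assumes normality, so it cannot be applied ``on each component separately.'' What you need is the local Morrey-type estimate of \cite[Corollary 3.7]{trace}, which is what your remark on inner-metric arcs is really pointing at.

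\paragraph{If.} Here there is a genuine gap. The step you call the main obstacle is the whole difficulty, and it is left open: your plan relies on two objects it never constructs.
\begin{enumerate}
\item[(a)] For $u\circ\pi_S$ to be Neumann regular, each stratum $S\subset\delta\Gamma$ needs a \emph{Lipschitz} retraction $\pi_S$ onto $S$ with $\pi_S\circ\pi_\Gamma=\pi_S$ near $\Gamma$. The closest point retraction onto $S$ need not satisfy this. Naively precomposing with $\pi_\Gamma$ does not give a map defined and Lipschitz on a neighborhood of $S$, because the domain of $\pi_\Gamma$ pinches at $\delta\Gamma$.
\item[(b)] Your error estimate needs a Neumann regular partition $\{\phi_S\}$, constant along the fibres of $\pi_\Gamma$, subordinate to tubes of width $\varepsilon\to 0$. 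Both $|D\pi_S|$ and $|\nabla\phi_S|\,|x-\pi_S(x)|$ must stay bounded \emph{uniformly in $\varepsilon$} on overlaps of tubes of different strata. Without this, ``the gradient error is bounded but supported on sets of measure $\to 0$'' is unproved.
\end{enumerate}
Citing Lipschitz stratifications and \L ojasiewicz inequalities names tools, not a construction. Also, the worry that $D\pi_\Gamma$ blows up is beside the point: at points of $\Gamma$, $D\pi_\Gamma$ is the orthogonal projection onto $T\Gamma$, so it is bounded once $U_\Gamma$ is shrunk.

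\paragraph{How the paper handles (a).} It only needs a local construction, given in Lemma \ref{lem_t}. Near $\xo\in S$, in coordinates where $S$ is flat, it sets $\pi:=\alpha\,\pi_k\circ\pi_\Gamma+(1-\alpha)\pi_k$ with $\alpha:=\psi(\rho_\Gamma/\sigma(\pi_\Gamma)^2)$. Here $\sigma$ is a positive smooth function on $\Gamma$ with bounded derivative, smaller than $d(\cdot,\delta\Gamma)$ and than the width of $U_\Gamma$. On the support of $D\alpha$, the bound $|D\alpha|\lesssim 1/\sigma$ is compensated by $|\pi_k\circ\pi_\Gamma-\pi_k|\le|x-\pi_\Gamma(x)|<\sigma(\pi_\Gamma(x))$. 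The same interpolation gives a compatible $\rho$, hence the Lipschitz Neumann regular bumps $\psi_\nu(\rho)$ of Proposition \ref{pro_p}.

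\paragraph{How the paper avoids (b).} It never estimates $u-u\circ\pi$ quantitatively. With $g_j\in\cc^\infty(\mba)$ approximating $u$, the function $\lambda\,(g_j-g_j\circ\pi)$ has zero trace on $S$. Theorem \ref{thm_trace}(iii) then approximates it by some $f_j$ vanishing near $\adh{S}$, and $v_j:=f_j+g_j\circ\pi$ is close to $u$ and Neumann regular near $S$. A decreasing induction on the dimension of strata (the index $\kappa$) finishes the proof. The local pieces are glued by the partition of unity of Proposition \ref{pro_p}, which is subordinate to a fixed finite cover rather than to shrinking tubes. Your plan is missing some such mechanism: either this trace-zero density argument, or an actual uniform, $\pi_\Gamma$-compatible controlled-tube construction.
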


The proof requires some material. We first prove a lemma that gives a Lipschitz tubular neighborhood near a given point $\xo \in \delta \Gamma$ and then construct  Neumann regular partitions of unity. This tubular neighborhood will be used  to produce our partitions of unity as well as to prove Theorem \ref{d}.

We recall that a  {\bf tubular neighborhood} of a manifold $S$  is a triple $(U,\pi,\rho)$, with $U$ neighborhood of $S$, $\pi: U\to S$   retraction,  and $\rho: U\to \R$ nonnegative function satisfying $\rho^{-1}(0)=S$. We say that a tubular neighborhood is $\cc^k$ or Lipschitz on a set if so are both $\pi$ and $\rho$ on this set.

\begin{lem}\label{lem_t}
 Let  $S\subset \delta  \Gamma$ be  a $\cc^\infty$  definable submanifold of $\R^n$. For each $x_0\in S$ there is  a Lipschitz   tubular neighborhood $(U_{\xo},\pi,\rho)$ of $S\cap U_\xo$ which is $\cc^\infty$ on $U_\xo\setminus \delta \Gamma$ and satisfies near $\Gamma$:
 \begin{equation}\label{eq_comp_cond}
\rho \circ \pi_{\Gamma}=\rho \et \pi \circ \pi_{\Gamma}=\pi.
 \end{equation}\end{lem}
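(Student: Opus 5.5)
We start from the standard smooth tubular neighborhood $(U_S,\pi_S,\rho_S)$ of $S$ near $\xo$, recalled in section \ref{sect_notations}, and correct it so that it becomes compatible with $\pi_\Gamma$ in the sense of (\ref{eq_comp_cond}). Near $\Gamma$ the natural candidates are
$$\pi:=\pi_S\circ\pi_\Gamma\et \rho:=\rho_S\circ \pi_\Gamma .$$
Since $\pi_\Gamma\circ\pi_\Gamma=\pi_\Gamma$, these automatically satisfy (\ref{eq_comp_cond}). Away from $\Gamma$ we keep $\pi_S$ and $\rho_S$, and we glue the two definitions by an interpolation that does not disturb (\ref{eq_comp_cond}) on a smaller neighborhood of $\Gamma$.

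\textbf{Step 1: choice of the domain where $\pi_\Gamma$ is used.}
The retraction $\pi_\Gamma$ is only defined and smooth on $U_\Gamma$. Its derivative may blow up near $\delta\Gamma\supset S$, since the reach of $\Gamma$ tends to $0$ there. Using a Whitney $(b)$-regular (in fact Lipschitz/Verdier regular) stratification of $\delta M$ containing $S$ and $\Gamma$ as strata near $\xo$, we choose a definable function $\mu\lesssim d(\cdot,\delta\Gamma)$ on $\Gamma$. We pick it so that on the horn neighborhood $U_\Gamma^{\mu}$ of (\ref{eq_tubular}):
\begin{enumerate}[(a)]
\item $|D\pi_\Gamma|$ is bounded;
\item $\pi_S\circ\pi_\Gamma$ is Lipschitz.
\end{enumerate}
For (a), on $U_\Gamma^\mu$ we have $|D\pi_\Gamma(x)-D\pi_\Gamma(\pi_\Gamma(x))|\lesssim d(x,\Gamma)\cdot|\mathrm{II}|$. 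The second fundamental form satisfies $|\mathrm{II}|\lesssim 1/d(\cdot,\delta\Gamma)$ by \L ojasiewicz-type definable curvature bounds, after possibly shrinking $\mu$ by a constant factor. For (b), the key input is the regularity condition along $S$: the tangent spaces $T_y\Gamma$ approach spaces containing $T_{\pi_S(y)}S$, with an error $\lesssim d(y,S)$ (Verdier condition). This gives $|D\pi_S\circ D\pi_\Gamma - D\pi_S|\lesssim 1$ on the horn.

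\textbf{Step 2: gluing.}
Let $\theta:=\psi\bigl(d(x,\Gamma)/\mu(\pi_\Gamma(x))\bigr)$, with $\psi$ as in (\ref{eq_psi}). This function is smooth off $\delta\Gamma$, equals $1$ on $U_\Gamma^{\mu/2}$, and vanishes outside $U_\Gamma^{3\mu/4}$. Its gradient is $\lesssim 1/\mu\lesssim 1/d(\cdot,\delta\Gamma)$. We set
$$\rho:=\theta\,\rho_S\circ\pi_\Gamma+(1-\theta)\rho_S .$$
For $\pi$ we must remain in $S$. We therefore take
$$\pi:=\pi_S\bigl(\theta\,\pi_S\circ\pi_\Gamma+(1-\theta)\pi_S\bigr),$$
which is well defined because the convex combination of two points of $S$ lies in the tubular neighborhood $U_S$. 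Lipschitzness of the glued maps follows from a single estimate:
$$|\pi_S\circ\pi_\Gamma(x)-\pi_S(x)|\lesssim |\pi_\Gamma(x)-x| \le \mu \lesssim d(x,\delta\Gamma).$$
It compensates the gradient of $\theta$ exactly: for instance $|\nabla\theta|\,|\rho_S\circ\pi_\Gamma-\rho_S|\lesssim \mu^{-1}\cdot d(x,S)\mu\lesssim 1$. Smoothness off $\delta\Gamma$, $\rho^{-1}(0)=S$ and $\pi_{|S}=\mathrm{id}$ are then clear. On $U_\Gamma^{\mu/2}$ we have $\theta\equiv1$, so (\ref{eq_comp_cond}) holds "near $\Gamma$". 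Here we check that $\pi_\Gamma(U_\Gamma^{\mu/2})\subset\Gamma$ and that $\pi_\Gamma$ of a point in $U_\Gamma^{\mu/2}$ has the same $\pi_\Gamma$-image.

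\textbf{Main obstacle.}
The main difficulty is Step 1(b), the Lipschitz control of $\pi_S\circ\pi_\Gamma$ uniformly as one approaches $S$. It needs a quantitative comparison between $T\Gamma$ and $TS$ near $S$. I would first try to obtain it from a Verdier (w)-regular definable stratification, which exists for subanalytic sets. If that is insufficient, I would fall back on a bi-Lipschitz trivialization of $\delta M$ along $S$ near $\xo$ (as cited in section \ref{sect_sub}) to transport the problem to a product situation, where the compatible tubular structure is given by the trivialization.
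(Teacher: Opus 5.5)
Your construction is essentially the paper's. It uses the same cutoff in a horn neighborhood $U_\Gamma^\mu$, the same formula $\rho=\theta\,\rho_S\circ\pi_\Gamma+(1-\theta)\rho_S$, and the same compensation $|\nabla\theta|\cdot|\pi_\Gamma(x)-x|\lesssim\mu^{-1}\cdot\mu$. Your re-projection $\pi_S\bigl(\theta\,\pi_S\circ\pi_\Gamma+(1-\theta)\pi_S\bigr)$ is a legitimate alternative to the paper's device, which flattens $S$ to $\R^k\times\{0\}$ in a chart and interpolates the linear projections $\pi_k\circ\pi_\Gamma$ and $\pi_k$. However, Step 1 is mis-argued, and what you call the main obstacle is not one.

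In (a), the bound $|\mathrm{II}|\lesssim 1/d(\cdot,\delta\Gamma)$ is false in general. Suppose $\Gamma$ contains the graph of $f(x,y)=x^6/(x^4+y^2)$ over $\{0<x<1,\,|y|<1\}$. Then $\nabla f$ is bounded, while $\pa_y^2f(x,0)=-2/x^2$ at a point whose distance from $\delta\Gamma$ is $\approx x$. So shrinking $\mu$ by a constant factor cannot work. No curvature estimate is needed at all. On $\Gamma$, $D\pi_\Gamma$ is the orthogonal projection onto $T\Gamma$, so by continuity $|D\pi_\Gamma|\le 2$ on a smaller definable neighborhood $U_\Gamma$. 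One then just requires $\mu\le\frac12 d(\cdot,\R^n\setminus U_\Gamma)$; this also ensures that $\theta$, extended by $0$, is smooth off $\delta\Gamma$. This is exactly what the paper does.

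Once (a) holds, (b) is immediate. Since $\xo\in\adh\Gamma$, we have $d(x,\Gamma)\le|x-\xo|$, so $\pi_\Gamma(x)$ stays within $2|x-\xo|$ of $\xo$, where $\pi_S$ is smooth. Hence $D(\pi_S\circ\pi_\Gamma)=D\pi_S\,D\pi_\Gamma$ is bounded. No Verdier condition or bi-Lipschitz trivialization is involved: your estimate $|D\pi_S\circ D\pi_\Gamma-D\pi_S|\lesssim1$ is just the boundedness of both terms.

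Two further points are missing from your argument.

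First, $|\nabla\theta|\lesssim1/\mu$ and the smoothness of $\theta$ off $\delta\Gamma$ require $\mu$ to be $\cc^\infty$ on $\Gamma$ with bounded derivative, because $\nabla\theta$ contains the term $d(x,\Gamma)\,D\mu\,D\pi_\Gamma/\mu^2$. A definable choice such as $c\,d(\cdot,\delta\Gamma)$ is not smooth. This is why the paper replaces $\mu$ by a smooth, non-definable $\sigma<\mu$ with bounded derivative, built from a partition of unity. Note also that $\mu\lesssim d$ gives $1/\mu\gtrsim 1/d$, not $\lesssim$; this slip is harmless since you compensate with $\mu$ anyway.

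Second, a bounded derivative on the complement of $\delta\Gamma$ gives the Lipschitz property only after two more checks, as in the paper:
\begin{enumerate}[(i)]
\item $\pi$ and $\rho$ are continuous at $\delta\Gamma$; this is where $\mu\le d(\cdot,\delta\Gamma)$ is used;
\item almost every segment meets the nowhere dense definable set $\delta\Gamma$ in finitely many points.
\end{enumerate}
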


\begin{proof} Let $U_\Gamma$  be a sufficiently small definable neighborhood of $\Gamma$ for the closest point retraction $\pi_{\Gamma}$ (onto $\Gamma$) to be smooth. Taking $U_\Gamma$ smaller if necessary, we may assume  $|D_x\pi_\Gamma|$ to be bounded on $U_\Gamma$ (it is bounded on $\Gamma$ in virtue of (\ref{eq_ker_dpi_nabla_drho})). %$U_S$ being the domain of a local retraction $\pi_S$ ,
Let for $x\in \Gamma$
$$\mu(x):= \min (\frac{1}{2}d(x,\R^n\setminus U_\Gamma),d(x,\delta \Gamma)) .$$
 Since $\mu(x)<d(x,\R^n\setminus U_\Gamma) $, the topological frontier of $U^\mu_\Gamma$ in $U_\Gamma$ (see (\ref{eq_tubular}) for $U^\mu_\Gamma$) coincides with $\{x\in U_\Gamma:d(x, \Gamma)=\mu (\pi_\Gamma(x))\}$. Choose a positive $\cc^\infty$ function $\sigma<\mu$ on $\Gamma$  with bounded first derivative (if we choose  an  exhaustive sequence $(K_i)_{i\in \N}$ of compact subsets of $\Gamma$, a smooth partition of unity $\varphi_i$ subordinate to $int(K_{i+2})\setminus K_{i}$, and a sequence of positive numbers $(\ep_i)_{i\in \N}$, the function $\sigma:=\sum \ep_i \varphi_i$ will have the required  properties if   $(\ep_i)_{i\in \N}$  is decreasing sufficiently fast).
% Up to a partition of unity we can assume that $u$ is supported in some $U_{x_i}$, say $U_{x_0}$.   %The result follows by induction.

Set for $x\in U_\Gamma^\mu$ close to $\xo$
$$\alpha(x):=\psi\left( \frac{\rho_{\Gamma}(x)}{\sigma(\pi_{\Gamma}(x))^2}\right),$$
where $\psi$ is as in (\ref{eq_psi}).
 Notice that                                                                                             on the support of  $\psi'(\frac{\rho_{\Gamma}(x)}{\sigma(\pi_{\Gamma}(x))^2})$, we have
 \begin{equation}\label{eq_rhosigma}
 \rho_{\Gamma}(x)<\sigma(\pi_{\Gamma}(x))^2,
 \end{equation}
 which entails $|D_x \rho_{\Gamma}|\overset{(\ref{eq_ker_dpi_nabla_drho})}\le 2\sigma(\pi_\Gamma(x))$ on this set, and therefore:
\begin{equation}\label{eq_bound_Dalpha}
	|D_x\alpha|= |\psi'(\frac{\rho_{\Gamma}(x)}{\sigma(\pi_{\Gamma}(x))^2})||\frac{D_x\rho_{\Gamma}}{\sigma(\pi_{\Gamma}(x))^2}-\frac{2\rho_{\Gamma}(x) D_{\pi_{\Gamma}(x)}\sigma \cdot D_x\pi_{\Gamma}}{\sigma(\pi_{\Gamma}(x))^3}|\lesssim \frac{1}{\sigma(\pi_{\Gamma}(x))}.\end{equation}
We now can set for $x\in U_\Gamma ^\mu$ close to $\xo$:
$$\rho(x):=\alpha(x)\rho_S(\pi_{\Gamma} (x))+(1-\alpha(x))\rho_S(x).$$
As $\rho_\Gamma(x)=\mu(\pi_{\Gamma}(x))^2$ on  the topological frontier of $U^\mu_\Gamma$ in $U_\Gamma$ and since $\sigma<\mu$, the function $\alpha$ vanishes near the topological frontier of $U^\mu_\Gamma$ in $U_\Gamma$,  which means that
$\rho:=\rho_S$ extends
 continuously $\rho$ to a neighborhood of $\xo$ (since $\mu(x)\le d(x,\delta \Gamma)$, $\rho_S(\pi_\Gamma(x))$ tends to $\rho_S(x)$ as $x\in U_\Gamma^\mu$ tends to $\delta \Gamma$), smoothly at every point of $M\cup \Gamma$.
Since $\alpha\equiv 1$ near $\Gamma$, the condition $\rho\circ \pi_\Gamma=\rho$ holds by construction.
Moreover, as for $x$ close to $\xo$ and $w\in \R^n$ we have
$$D_x\rho(w)=\alpha(x)D_{\pi_{\Gamma}(x)}\rho_S D_x\pi_{\Gamma}(w) +(1-\alpha(x))D_x\rho_S(w)+ (\rho_S(\pi_{\Gamma} (x))- \rho_S(x))D_x\alpha(w),$$
and since 
$$|\rho_S(\pi_{\Gamma} (x))- \rho_S(x)|\le |x-\pi_\Gamma(x)|=\rho_S(x)^{1/2}\overset{(\ref{eq_rhosigma})}< \sigma(x) ,$$
(\ref{eq_bound_Dalpha}) entails that  $|D_x\rho|$ is bounded near $\xo$ on $U_\Gamma^\mu$. On $\bou(\xo,\ep)\setminus U_\Gamma^\mu$, $\ep >0$ small, as $\rho=\rho_S$,  $|D_x\rho|$ is also bounded  on the complement of $\delta \Gamma$.  Hence, $|D_x \rho|$  is bounded on $\bou(\xo,\ep)\setminus\delta \Gamma$.

 Because $\delta \Gamma$ is subanalytic and has empty interior, almost every segment of line cuts this set at finitely many points (possibly none). Since the restriction of $\rho$ to every such segment has bounded derivative on the complement of a finite set, and since $\rho$ is continuous,    we  see that $\rho$ is $L$-Lipschitz near $\xo$ with $L:= \sup\{ |D_x\rho|:x\in \bou(\xo,\ep)\setminus \delta \Gamma\}$, $\ep>0$ small.

To construct  $\pi$, we will argue up to a local coordinate system $\phi:U_\xo\to U_0$ of $S$ at $\xo$ (we recall that the strata are $\cc^\infty$), where $U_0$ (resp. $U_\xo$) is a neighborhood of the origin (resp. of $\xo$) in $\R^n$. We will thus identify $\xo$ with $0$, $S$ with a neighborhood  of the origin in $\R^k\times \{0_{\R^{n-k}}\}$, where $k=\dim S$, $M$ and $\Gamma$ with their respective images under $\phi$, and $ \pi_{\Gamma}$ with $\phi\circ \pi_{\Gamma} \circ \phi^{-1}$.
Set then for $x\in U^\mu_{\Gamma}$, with $\mu$ as above,
$$ \pi(x):=\alpha(x)\pi_k (\pi_{\Gamma}(x))+ (1-\alpha(x))\pi_k(x),$$
where $\alpha$ is as above and $\pi_k:\R^n\to\R^k\times \{0_{\R^{n-k}}\} $ is the orthogonal projection.
Setting $\pi(x)=\pi_k(x)$ for $x\in U_0\setminus U_{\Gamma}^\mu$ extends continuously $\pi$, smoothly on the complement of $\delta \Gamma$, and since $\alpha\equiv 1$ near $\Gamma$, we clearly have $\pi\circ \pi_\Gamma=\pi$ near $\Gamma$.
Moreover, since for $w\in \R^n$ and $x\in U_\Gamma^\mu$ close to $0$, we have
$$D_x\pi(w)= \alpha(x)D_{\pi_{\Gamma}(x)}\pi_k D_x\pi_{\Gamma}(w)+(1-\alpha(x)) D_x\pi_k(w)+ D_x\alpha( w)\cdot (\pi_k (\pi_{\Gamma}(x))-\pi_k(x)),$$
and because, by definition of $\pi_k$ and $\pi_\Gamma$, we have
$$|\pi_k (\pi_{\Gamma}(x))-\pi_k(x)|\le |\pi_{\Gamma}(x)-x|\lesssim \sqrt{\rho_{\Gamma}(x)}, $$
we see that (\ref{eq_rhosigma}) and (\ref{eq_bound_Dalpha}) entail that $|D_x\pi|$ is bounded near  $0$. Since $\pi$ is continuous, smooth  on the complement of $\delta  \Gamma$ in some neighborhood of $0$, and has bounded derivative, the same argument as for $\rho$ shows that it is a Lipschitz mapping.
\end{proof}

%
%  \begin{enumerate}
%   \item $\rho \circ \pi_{\pa M}=\rho$ and $\pi \circ \pi_{\pa M}=\pi$ on $U_{\pa M}\cap U_\xo$.
% \item
%  \end{enumerate}

We now are going to construct partitions of unity by Neumann regular functions,  which can also be useful for other applications. Like the above tubular neighborhood, these partitions of unity will just be Lipschitz, but Example \ref{exa_pgd} shows that it is not possible to get something much better.

\begin{pro}\label{pro_p}
Given a finite covering of $\mba$ by open sets, there is a $\cc^{0,1}(\mba)$ partition of unity subordinate to this covering, $\cc^\infty$ on $M\cup \Gamma$, and which is exclusively constituted by Neumann regular functions.
\end{pro}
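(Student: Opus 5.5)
We plan to build the partition of unity by downward induction over a stratification of $\delta\Gamma$, using Lemma \ref{lem_t} to produce Neumann regular Lipschitz bump functions near singular strata, and using $\pi_\Gamma$ near the regular part $\Gamma$, as in the proof of Theorem \ref{thm_p_petit}. It suffices to produce, for the given finite covering $(O_i)$ of $\mba$, finitely many nonnegative functions $\phi_j\in\cc^{0,1}(\mba)\cap\cc^\infty(M\cup\Gamma)$, each Neumann regular and each supported in some $O_{i(j)}$, with $\Phi:=\sum_j\phi_j>0$ on $\mba$. Setting $\theta_i:=\Phi^{-1}\sum_{i(j)=i}\phi_j$ (grouping the $j$'s) then gives the partition of unity. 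Indeed, $\Phi$ is Lipschitz and bounded below by compactness. Quotients and sums of Neumann regular functions are Neumann regular, because $\nabla(f/g)\cdot\nbf=(g\nabla f-f\nabla g)\cdot\nbf/g^2=0$. They are also smooth on $M\cup\Gamma$.

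The local bumps come in three kinds. At points of $M$ we take ordinary smooth bumps with support in $M$. At a point $\xo\in\Gamma$ we take $\beta\circ\pi_\Gamma$ multiplied by a function of $\rho_\Gamma$, say $\psi(\rho_\Gamma/\ep^2)$, where $\beta$ is a bump on $\Gamma$ near $\xo$. This is smooth, and it is Neumann regular by (\ref{eq_ker_dpi_nabla_drho}): $D_x\pi_\Gamma$ kills normal vectors, and $\nabla\rho_\Gamma=0$ on $\Gamma$. For $\delta\Gamma$ we fix a stratification $\Sigma$ of $\delta\Gamma$ satisfying the frontier condition. For $\xo$ in a stratum $S$ we take the tubular neighborhood $(U_\xo,\pi,\rho)$ of Lemma \ref{lem_t} and set
$$\phi(x):=\beta(\pi(x))\,\psi\Big(\frac{\rho(x)}{\ep^2}\Big),$$
with $\beta$ a smooth bump on $S$ near $\xo$. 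This function is Lipschitz, because $\pi$ and $\rho$ are, and it is smooth off $\delta\Gamma$. By (\ref{eq_comp_cond}) we have $\phi\circ\pi_\Gamma=\phi$ near $\Gamma$. Differentiating this identity at $x\in\Gamma$ along a normal vector $\nbf$ gives $D_x\phi(\nbf)=D_x\phi(D_x\pi_\Gamma\nbf)=0$, so $\phi$ is Neumann regular. Its support is contained in a small neighborhood of $\xo$, and $\phi$ equals $1$ on a neighborhood in $\mba$ of a neighborhood of $\xo$ in $S$. Note that $\rho$ vanishes only on $S$, so this neighborhood is a genuine neighborhood only near $S$ itself.

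The covering argument is as follows. We first handle the strata of $\Sigma$ in increasing order of dimension. Zero-dimensional strata are finite and are covered by finitely many such bumps. Suppose the union $W$ of the sets where the bumps chosen so far are positive covers all strata of dimension $<k$. Then for each $k$-dimensional stratum $S$, the set $S\setminus W$ is compact by the frontier condition, since $\adh S\setminus S$ is a union of lower-dimensional strata. It is therefore covered by finitely many neighborhoods of the above type, each chosen inside some $O_i$. After all strata of $\delta\Gamma$ are handled, $\adh\Gamma\setminus W$ is a compact subset of $\Gamma$, which we cover by finitely many $\Gamma$-bumps. Finally $\mba\setminus W$ is a compact subset of $M$, which we cover by interior bumps. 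Since bumps are only positive near their centers, at each stage we use the bumps' open positivity sets, and compactness gives finiteness.

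The main obstacle is the claim that $\phi=\beta\circ\pi\cdot\psi(\rho/\ep^2)$ is positive on a full neighborhood in $\mba$ of $\xo$, and has support inside $O_i$, uniformly enough for the compactness argument. For this we need $\rho$ to be comparable to the distance to $S$ near $\xo$, and $\pi$ to be close to the identity on $S$. The second point holds because $\pi=\pi_k$ on $S$ and $\pi$ is continuous. For the first, continuity of $\rho$ together with $\rho^{-1}(0)=S$ suffices to get positivity at and near $\xo$, and to make the support small for small $\ep$ and small support of $\beta$. I expect this to work directly from the construction in Lemma \ref{lem_t}, with the remaining care needed to ensure that $\rho$ vanishes only on $S$ in the neighborhood being used.
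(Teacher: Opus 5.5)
Your local bumps and the normalization step match the paper's proof. Near $\delta\Gamma$ you use Lemma \ref{lem_t} composed with $\psi$, near $\Gamma$ you use $\pi_\Gamma$ and $\rho_\Gamma$, and the Neumann condition comes from (\ref{eq_comp_cond}) and $\ker D_x\pi_\Gamma=T_x\Gamma^\perp$. The covering argument, however, fails at its last step. You assert that once $\adh\Gamma$ is covered, $\mba\setminus W$ is a compact subset of $M$. This presumes $\delta M\subset\adh\Gamma$, which is false in general.

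For example, take $M=\bou(0,1)\setminus\{0\}\subset\R^n$ with $n\ge 2$. The origin lies in $\delta M$, but $M$ is not Lipschitz regular there, so $0\notin\pa M$, and $\adh\Gamma$ is just the sphere. Your interior bumps are supported in $M$, and your other bumps are centered on $\adh\Gamma$ with small support. So nothing covers the origin. The repair is easy, and it is exactly the paper's remark that at points of $\mba\setminus\adh\Gamma$ the statement is obvious. Near $y\in\delta M\setminus\adh\Gamma$, take a smooth bump on $\R^n$ whose support misses the closed set $\adh\Gamma$. The Neumann condition is then vacuous.

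The ``main obstacle'' you flag does not require comparing $\rho$ with $d(\cdot,S)^2$. Choose a closed ball $\adh{\bou(\xo,r)}\subset U_\xo\cap O_i$ with $\supp\beta\subset\bou(\xo,r)$, and consider the compact sets $\{x\in\pa\bou(\xo,r):\rho(x)\le c,\ \pi(x)\in\supp\beta\}$. Since $\rho^{-1}(0)=S\cap U_\xo$ and $\pi$ is the identity on $S$, these sets decrease to $\emptyset$ as $c\to 0$. Hence one of them is empty, so for $\ep$ small your $\phi$ vanishes near the sphere and extends by $0$ to a Lipschitz function supported in $O_i$.

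The paper's route is also simpler than yours. It applies Lemma \ref{lem_t} with $S=\{\xo\}$, which is a legitimate $0$-dimensional submanifold of $\delta\Gamma$. Then $\rho^{-1}(0)=\{\xo\}$, and $\psi(\rho/\nu)$ is already a bump at $\xo$, with no strata and no factor $\beta\circ\pi$. Even with your bumps, the induction over strata dimensions is unnecessary. Each bump is positive on a full neighborhood of its center in $\R^n$, so compactness of $\mba$ alone yields finitely many bumps whose positivity sets cover $\mba$.
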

\begin{proof}
It suffices to construct Lipschitz Neumann regular bump functions, i.e. to prove that every $\xo\in  \mba$ has a fundamental system of neighborhoods  on each of  which we can find a  Neumann regular compactly supported Lipschitz function $\theta_\xo$ ($\cc^\infty$ on $M\cup \Gamma$) that satisfies $\theta_\xo\equiv 1$ in the vicinity of $\xo$.   Fix $\xo\in \adh{\Gamma}$ (at points of $\mba\setminus \adh{\Gamma}$ the statement is obvious) and a number $\nu>0$.

\underline{First Case:} $\xo \in \delta \Gamma$. Let $(U_\xo,\pi,\rho)$ denote the (local) Lipschitz tubular neighborhood provided by Lemma \ref{lem_t}, applied with $S=\{\xo\}$.  Set
$$\theta_\xo(x):= \psi_\nu(\rho(x)),  $$
where $\psi_\nu(t):=\psi(\frac{t}{\nu})$, $\psi$ being as in (\ref{eq_psi}). The function $\theta_\xo$ is Lipschitz (resp. $ \cc^\infty$ on $M\cup\Gamma$) as a composite of Lipschitz (resp. $\cc^\infty$) functions. It is $1$ on a neighborhood of $\xo$ and  is supported in $\{x\in U_\xo:  \rho(x)\le \frac{3}{4}\nu\}$.
Furthermore, for each $x\in  \Gamma \cap U_\xo$ we have for every vector $\nbf$  normal at $x$
 $$D_x\theta_\xo(\nbf)=\psi_\nu'(\rho(x))D_x\rho(\nbf)\overset{(\ref{eq_comp_cond})}=\psi_\nu'(\rho(x))D_x\rho(D_x \pi_\Gamma (\nbf))\overset{(\ref{eq_tubular})}{=}0.$$
%  and since we have for $x\in \Gamma\cap U_\xo$ %$D_x \pi_{\Gamma} (\nbf)=0$, which entails
%  $$ D_x\rho(\nbf)\overset{(\ref{eq_comp_cond})}=D_x\rho(D_x \pi_\Gamma (\nbf))\overset{(\ref{eq_tubular})}{=}0,$$
%   we see that $\theta_\xo$ is Neumann regular.
  %$D_x\theta_\xo(\nbf)=0$, as required.

 \underline{Second case:}  $\xo \in \Gamma$. In this case, define a $\cc^\infty$ function by
 $$\theta_\xo(x):=\psi_\nu(\eta(x))\cdot \psi_\nu(\rho_\Gamma(x)),
 $$
where $\eta(x):=|\pi_\Gamma(x)-\xo|^2$ and $\psi_\nu$ is as above. A computation of derivative yields for every vector $\nbf$  normal at  $x\in  \Gamma \cap U_\xo$
  $$D_x\theta_\xo(\nbf)=\psi_\nu(\rho_{\Gamma}(x)) \psi_\nu'(\eta(x)) D_x\eta( \nbf) +\psi_\nu(\eta(x)) \psi_\nu'(\rho_{\Gamma}(x))D_x\rho_{\Gamma}(\nbf),$$
  and since  we have
  $$D_x\eta(\nbf)=2(\pi_{\Gamma}(x)-\xo)\cdot D_x\pi_{\Gamma}(\nbf)\overset{(\ref{eq_tubular})}{=}0$$
  and $\psi'_\nu (0)=0$,
   we see that the Neumann condition holds.
\end{proof}%with  Namely, we set $$\theta_\xo:=\psi(\pi(x))\cdot \phi(\rho^2(x)),$$
%  where $\pi$ and $\rho$ are provided  by the latter lemma. We now put a square to $\rho_\Gamma$ because we wish $\theta_\xo$ to be smooth (it is to be smooth at points of $\Gamma$) and $\rho_\Gamma$ is not smooth while $\rho_\Gamma^2$  is

\begin{proof}[Proof of Theorem \ref{d}.]
Assume that $M$ is not connected at a point $\xo\in \delta M$ and denote by $U$ a  connected component of $M\cap \bou(\xo,\ep)$, $\ep>0$ small. Take a function which coincides with a compactly supported $\cc^\infty$ bump function on $U$ and which is zero on the other connected components of $M\cap \bou(\xo,\ep)$.  Such a function belongs to $W^{1,\infty}(M)$ but does not extend continuously at $\xo$, and hence cannot be approximated by Lipschitz functions, by \cite[Corollary $3.7$]{trace} (applied with $A:=\{\xo\}$, see the proof of \cite[Corollary $3.9$]{trace}  for more, the  argument is completely analogous). Hence, the space of Lipschitz functions  fails to be dense in $W^{1,p}(M)$ for arbitrarily large values of $p$ if $M$ is not connected at every boundary point.

To prove the converse, suppose that  $M$ is normal.
Let $\Sigma$ be a stratification of $\delta M$ satisfying the frontier condition and compatible with $\Gamma$ (i.e.  $\Gamma$ is a union of strata). The idea of the proof is to first approximate the given function near the strata of dimension $0$ and next approximate the upshot successively near the strata of higher dimension. We therefore naturally introduce the following function $\kappa$ on which our induction will rely.

 Given $j\le m-1$, we denote by $X_j$ the union of all the strata of
 $\Sigma$ of dimension less than or equal to $j$, with $X_{-1}=\emptyset$. Given a function $u$  on an open subset $V$  of $ \mba$, we set
$$\kappa(u):=\max \{k \le m: u \mbox{ is Neumann regular on a neighborhood of  } X_{k-1}\cap V\mbox{ in } \mba \},$$
and we will prove by decreasing induction on $k\le m$ the following statements:

$\mathbf{(H_k)}$ For $p$ sufficiently large, given $u\in W^{1,p}(M)$  satisfying $\kappa(u)\ge k$, there is a sequence of Lipschitz functions $(v_j)_{j\in \N}$ in $  \cc^\infty(M\cup \Gamma)$ tending to $u$ in $W^{1,p}(M)$ and exclusively constituted by Neumann regular functions.

The theorem follows from $ \mathbf{(H_0)}$. Let us first check $\mathbf{(H_m)}$. Given $u$ such that $\kappa(u)=m$, there is a neighborhood $W$ of $\delta M$ on which $u$ is Neumann regular. Moreover, as well-known, $\cc^\infty(M)\cap W^{1,p}(M)$ is dense in $W^{1,p}(M)$. It then suffices to glue $u_{|W}$ with the terms of a sequence of smooth approximations of $u_{|M}$ by means of a partition of unity which is constant on some neighborhood of $\delta M$.

  We  now are going to prove $\mathbf{(H_k)}$, for $k\le m-1$, assuming $\mathbf{(H_{k+1})}$. Fix  $u\in W^{1,p}(M)$ satisfying $\kappa(u)=k$.  Note that if two functions $v$ and $w$ are Neumann regular in the vicinity of a point $x$, so are $ vw$ and $v+w$. As a matter of fact, since Proposition \ref{pro_p} provides us Neumann regular partitions of unity, the induction step reduces to prove the  following local fact for $p$ large enough: 

\noindent{\bf Claim.} Every $\xo\in \delta M$ has an open neighborhood $V_\xo$ in $\mba$ for which there is a sequence $(v_j)_{j\in \N}$ in  $\cc^{0,1}(V_\xo)$  tending to $u$ in $W^{1,p}(V_\xo\cap M)$ and satisfying $\kappa(v_j)> k$ for all $j$.
%, smooth at points of $\Gamma$,

Fix $\xo\in \delta M$ and denote by $S$ the stratum of $\Sigma$ that contains $\xo$. If $S\subset \delta \Gamma$, let $( U_\xo,\pi,\rho)$ be a local tubular neighborhood of $S$ as provided by Lemma \ref{lem_t}. If  $S\subset  \Gamma$, we just let $\pi:=\pi_{\Gamma}$ and define $U_\xo$ to be a small ball centered at $\xo$. Thanks to the frontier condition, choosing $U_\xo$ smaller if necessary, we can require that $S$ is the only stratum  of dimension less than or equal to $  \dim S$ met by  $U_\xo$.

If $\dim S<k$ then $\xo \in X_{k-1}$, which, since $\kappa(u)=k$, means that $\xo$ has a neighborhood $V_\xo$ on which $u$ is Neumann regular. Hence, we may set in this case $v_j:=u_{|V_\xo}$ for all $j$.

Assume thus that $\dim S\ge k$. Since $M$ is normal,
by Theorem \ref{thm_trace}, for $p$ large enough there is a sequence $g_j\in \cc^\infty(\mba)$ such that for each integer  $j\ge 1$
\begin{equation}\label{fp}||u-g_j||_{W^{1,p}(M)}<\frac{1}{j}.\end{equation}                                                                                                     Let $\lambda $ be a  $\cc_0^\infty$ function on $U_\xo\cap \mba$ equal to $1$ on some neighborhood $V_\xo$ of $\xo$ in $\mba$. Note that since $\lambda\cdot (g_j\circ \pi-g_j)$ has trace zero on $S$, Theorem \ref{thm_trace} entails (for $p$ large enough) that there is for each integer $j\ge 1$ a function  $f_j\in\cc^\infty_{\mba \setminus \overline{S}}(\mba)$  such that
$$||f_j-\lambda\cdot (g_j-g_j\circ \pi)||_{W^{1,p}(M)}<\frac{1}{j},$$
which implies  that
\begin{equation}\label{fgp}||f_j-(g_j-g_j\circ \pi)||_{W^{1,p}(V_\xo\cap M)}<\frac{1}{j}.\end{equation}

Let us now check that $v_j:=f_j+g_j\circ \pi$ (on $V_\xo$) has all the required properties. It is a Lipschitz function which is smooth everywhere $\pi$ is. Moreover,
$$||v_j-u||_{W^{1,p}(V_\xo\cap M)}\le ||(v_j-g_j\circ \pi)-(g_j-g_j\circ \pi)||_{W^{1,p}(V_\xo\cap M)}+||g_j-u||_{W^{1,p}(V_\xo\cap M)},$$
which,  by (\ref{fp}) and (\ref{fgp}), yields that $v_j$ tends to $u$. Since $U_\xo$ (and hence $V_\xo$) does not meet any other stratum of dimension $\le k$  than $S$ and $f_j$ vanishes near $S$, we see that $\kappa(f_j)>k$.  Note also that if $S\subset \delta\Gamma$ then for every $x\in V_\xo \cap \Gamma$ and every vector $\nbf\in \R^n$ normal at $x$
  $$D_x (g_j\circ \pi)(\nbf)=D_{\pi(x)} g_j D_x\pi(\nbf)\overset{(\ref{eq_comp_cond})}=D_{\pi(x)} g_j D_{\pi_\Gamma(x)}\pi D_x\pi_{\Gamma}(\nbf)\overset{(\ref{eq_tubular})}{=}0,$$
 %Again, since $V_\xo$ does not meet any other stratum of dimension $\le k$  than $S$, this
 which yields $\kappa (g_j\circ \pi)=m>k$. Moreover, if $S\subset \Gamma$ then $\pi=\pi_{\Gamma}$ and this obviously continues to hold. We conclude that  $\kappa(v_j)>k$, as needed.
\end{proof}
We now provide a couple of examples that shows that neither the partition of unity given by Proposition \ref{pro_p} nor the approximations given by Theorem \ref{d} could be required to be $\cc^2$ at singular points of $\delta M$.

\begin{exa}\label{exa_pgd}
 Let $$\omd :=\{(x,y,z)\in \R^3: xy(y+x)(y-zx)>0, 0<z<1  \}.$$
 For every $z_0\in (0,1)$, the boundary of
 $$\omd_{z_0}:=\omd \cap \{z=z_0\}$$
 consists of four lines
  $x=0$, $y=0$, $y=-x$, $y=z_0 x$.  These families of lines constitute surfaces that are smooth outside the $z$-axis.  We thus can stratify $\adh{\omd}$ by $S:=\{0_{\R^2}\}\times (0,1)$, the complement of the $z$-axis in these surfaces, and  $\omd$. This example is unbounded but our problem will be local and we could cut it with a little ball near a point $(0,0,z_0), z_0\in (0,1)$.

  We claim that every $u\in \cc^2(\adh{\omd})$ satisfying Neumann's condition at the smooth points of $\delta \omd$  is constant on $S$. Assume otherwise, i.e. assume that $\frac{\pa u}{\pa z}(0,0,z_0)\ne 0$ for some $z_0\in (0,1)$, for such a function $u$. Set  for $(x,y,z)\in \omd \cap \bou((0,0,z_0),\mu),$ $\mu$ being a positive number sufficiently small for $\frac{\pa u}{\pa z}$ to be nonzero on this ball,
$$\eta(x,y,z):= \left(\frac{\pa u}{\pa z}(x,y,z)\right)^{-1}\cdot \nabla u(x,y,z).$$
 As $u$ satisfies Neumann's condition, $\eta$ is tangent to $\delta \omd$ at every smooth point. It is easily checked that by continuity it must be tangent to all the strata. This vector field therefore  generates a $\cc^1$ local flow $(q,t)\mapsto \phi_t(q)$, for $q=(x,y,z)$ close to $(0,0,z_0)$, that preserves  $\delta \omd$.

Denote by $P$ the canonical projection onto the $z$-axis.  Since $P(\eta)\equiv (0,0,1) $, we must have $P(\phi_t(x,y,z))=z+t$, which means that $\phi_t$ induces a diffeomorphism between the respective germs of  $\omd_{z_0}$ and  $\omd_{z_0+t}$ at the origin. It is well-known that this is impossible. The derivative at the origin of such a diffeomorphism would respectively send the four lines $x=0$, $y=0$, $y=-x$, $y=z_0 x$ onto the lines $x=0$, $y=0$, $y=-x$, $y=(z_0+t) x$. The vectors generating the first three lines would be eigenvectors of the restriction of $D_{(0,0,z_0)}\phi_t$ to $\R^2\times\{0\}$, which would thus be a homothetic transformation of $\R^2\times\{0\}$, which therefore could not send the last line onto a different one.

This counterexample is due to H. Whitney   and was historically one of the motivations for geometers to study singularities from the $\cc^0$ point of view.
It shows that the partition of unity provided by Proposition \ref{pro_p}, which is just Lipschitz, could not be required to be $\cc^2$ at singularities (it is however smooth at smooth points of the boundary). It is unclear to the author whether it could be required to be $\cc^1$.% but it seems that obstructions will arise in higher dimensions as the number of strata grows.

This example is not connected at the points of the $z$-axis and therefore does not show that Theorem \ref{d} could not provide $\cc^2$ approximations. It is however not difficult to produce an analogous example in $\R^4$ which would be connected at frontier points, as follows. Take a polygon $X_t$ in $\R^2$ with one moving vertex and the other fixed, say $A(t)=(0,t)$. Take then the cone in $\R^3$ over $X_t\times \{1\}$ at the origin  (see picture) and denote by $Y_t$ the interior of this cone. The domain $U:=\{(x,y,z,t)\in \R^4: (x,y,z)\in Y_t\}$   is connected at each frontier point.

  \begin{figure}[h]
\includegraphics[ scale=0.35]{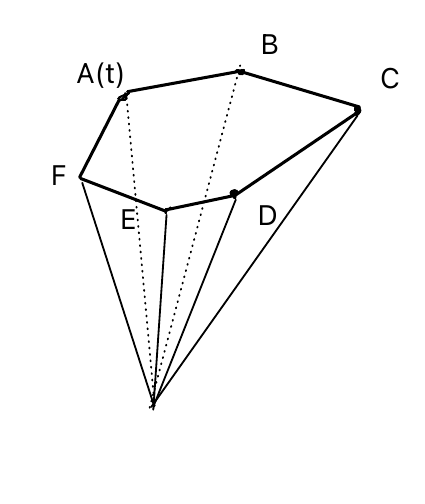}
\centering
  \caption{The set $Y_t$ is the interior of this polyhedron. }%width=5cm, height=3cm,
\label{fig1}\end{figure}
Any non constant $\cc^2$ function $u$ near the $t$-axis satisfying Neumann's condition at smooth boundary points will give rise to a $\cc^1$ flow mapping the cone over $X_t\times \{1\}$ onto the cone over $X_{t'}\times \{1\}$ for  $t\ne t'$, which will be impossible for the same reason as for $\omd_{z_0}$ and $\omd_{z_0+t}$. It is worthy of notice that this domain has Lipschitz boundary.\end{exa}

\begin{rem}\label{rem_concluding}
\begin{enumerate}
 \item Theorems \ref{thm_p_petit} and \ref{d} provide density results for Neumann regular functions under connectedness assumptions at boundary points that are proved to be necessary. If we drop these assumptions, the proofs yield the density of $W^{1,\infty}$ functions satisfying Neumann's condition.  Elements of $W^{1,\infty}$ are not smooth, they are only Lipschitz with respect to the inner metric (given by the lengths of paths), and  can have several limit values at a boundary point at which $M$ has several connected components.
%We also can  require (in both theorems)  the approximations that we construct  to be smooth at points of $\Gamma$ on the closure of each of these connected components.

\item The partition of unity given by Proposition \ref{pro_p} is not subanalytic. If we demand it to be subanalytic, it is no longer possible to require it to be $\cc^\infty$ at points of $\Gamma$. We however can  construct it $\cc^k$, with $k$ arbitrary large. The reason is that $\cc^\infty$ subanalytic functions being analytic, they must be zero on the whole of the connected component as soon as they vanish on a neighborhood of a point.

% \item 
% 
% Thanks to Proposition \ref{pro_p}, Theorem \ref{d} and \ref{thm_trace} establish together 
%  that if $M$ is normal and  $Z$ is a definable subset of $\pa M$ then $\cc^{0,1}(\mba)\cap \cc^\infty_{\mba\setminus \adh Z}(M\cup \Gamma)$ is dense  in $W^{1,p}(M,Z)$ for all $p$ sufficiently large.

\item The results of this article are indeed valid if $M$ is unbounded   since we could use cut-off functions to reduce the problem to a function with bounded support.

 \end{enumerate}

\end{rem}

	\end{document}